\definecolor{mygreen}{rgb}{0.1,0.75,0.2}
\newtheorem{thm}{Theorem}[section]
\newtheorem{lem}[thm]{Lemma}
\newtheorem{remark}[thm]{Remark}
\newtheorem{example}[thm]{Example}
\numberwithin{equation}{section}
\DeclareMathOperator{\grad}{grad}
\DeclareMathOperator{\ddiv}{div}
\DeclareMathOperator{\vol}{vol}
\newcommand{\la}{\langle}
\newcommand{\ra}{\rangle}
\newcommand{\pt}{\partial}
\newcommand{\ud}{\,\mathrm{d}}
\newcommand{\sP}{\mathcal{P}}
\newcommand{\sS}{\mathcal{S}}
\newcommand{\vx}{{x}}
\newcommand{\V}{\scriptscriptstyle{\text{h}}}
\newcommand{\cv}{X^{\scriptscriptstyle{\text{h}}}}
\newcommand{\vxv}{\vec{x}_{\mathbf{\ell}}^h}
\title{Fluctuations in Wasserstein dynamics on Graphs}
\author{Yuan Gao, Wuchen Li, Jian-Guo Liu}
\email{gao662@purdue.edu; wuchen@mailbox.sc.edu; jian-guo.liu@duke.edu.}
\address{Department of Mathematics, Purdue University, West Lafayette, IN, 47906;\\ 
Department of Mathematics, University of South Carolina, SC, 29208;\\ 
Department of Mathematics and Department of Physics, Duke University, Durham, NC 27708.}
\keywords{Wasserstein diffusions on discrete probability space; Diffusion approximations; Markov process; Onsager gradient flow; Chemical-Wasserstein Langevin dynamics.}
\thanks{Yuan Gao is partially supported by NSF award under DMS-2204288. Wuchen Li is partially supported by AFOSR YIP award No. FA9550-23-1-0087, NSF DMS-2245097, and NSF RTG:2038080. Jian-Guo Liu is partially supported by NSF award under DMS-2106988.}
\begin{document}

\maketitle

\begin{abstract}
In this paper, we propose a drift-diffusion process on the probability simplex to study stochastic fluctuations in probability spaces. We construct a counting process for linear detailed balanced chemical reactions with finite species such that its thermodynamic limit is a system of ordinary differential equations (ODE) on the probability simplex. This ODE can be formulated as a gradient flow with an Onsager response matrix that induces a Riemannian metric on the probability simplex. After incorporating the induced Riemannian structure, we propose a diffusion approximation of the rescaled counting process for molecular species in the chemical reactions, which leads to Langevin dynamics on the probability simplex with a degenerate Brownian motion constructed from the eigen-decomposition of Onsager's response matrix. The corresponding Fokker-Planck equation on the simplex can be regarded as the usual drift-diffusion equation with the extrinsic representation of the divergence and Laplace-Beltrami operator. The invariant measure is the Gibbs measure, which is the product of the original macroscopic free energy and a volume element. When the drift term vanishes, the Fokker-Planck equation reduces to the heat equation with the Laplace-Beltrami operator, which we refer to as canonical Wasserstein diffusions on graphs. In the case of a two-point probability simplex, the constructed diffusion process is converted to one dimensional Wright-Fisher diffusion process, which leads to a natural boundary condition  ensuring  that the process remains within the probability simplex. 
\end{abstract}

\section{Introduction}

Stochastic processes on probability density spaces have been widely studied in physics \cite{Dean,KK}, population genetics \cite{Burden,Jost,Wright}, mathematics \cite{DH,EM,WD,Strum}, and mean field game communities \cite{BCCD,cardaliaguet2019master,CG, GLL}. One example is the Wright-Fisher process \cite{Fisher,Wright}, which describes the diffusion approximation of stochastic processes for population genetics \cite{EM,Jost}. Another typical example is Dean-Kawasaki dynamics \cite{Dean,KK}, which are stochastic partial differential equations (SPDE) representing the fluctuations of the empirical measure of a large interacting diffusion particle system. This represents a common noise effect at the population density level. Nowadays, it is recognized that the Dean-Kawasaki process is a class of drift-diffusion processes in probability density space endowed with the Wasserstein-2 metric \cite{Konarovskyi}. It has been well studied that individual diffusion (the heat equation) is a gradient flow of free energies in the Wasserstein-2 space \cite{Otto,vil2008}. 
During
 the fluctuation of gradient flows, Dean-Kawasaki dynamics represent a reversible Langevin dynamics in Wasserstein space, also known as the Wasserstein drift-diffusion process \cite{WD,Strum}. More recently, Sturm defined canonical Wasserstein diffusion in multiple spatial dimensions \cite{Strum}.

In this paper, we study fluctuations on the probability simplex. The probability simplex is a natural probability space $\mathcal{P}(V)$ when describing the distribution function for a Markov jump process on a finite state space $V$ or on an undirected graph $G=(V,E)$. On probability simplex, we can conduct   explicit
calculations towards understanding the nature of Wasserstein diffusion. In \cite{EM1,M}, the authors showed that the Kolmogorov forward equation of detailed-balanced Markov chains is a gradient flow of free energies in terms of discrete Wasserstein distance on the probability simplex \cite{chow2012,EM1,M}. The natural questions are: \textit{What is the Langevin dynamics on the probability simplex $\mathcal{P}(V)$ beyond the deterministic forward equation with gradient flow structure? What process induces the fluctuations, and how can these fluctuations be studied?}

To answer these questions, we construct a drift-diffusion process on the probability simplex incorporating both the geometric structure suggested by Onsager's response matrix in the gradient flow formulation and the macroscopic free energy. With the constructed diffusion process on the probability simplex $\mathcal{P}(V)$, the corresponding Fokker-Planck equation on $\mathcal{P}(V)$ will be studied. For explicit examples with two-point state spaces $V$, fundamental solutions and the relation with the Wright-Fisher diffusion will be established. 

In Section \ref{sec2}, to study the origin of fluctuations on the probability simplex, we first construct counting process for linear chemical reactions in a container with fixed volume $N$. Here, the reaction rate from molecular species $i$ to molecular species $j$ is given by  $Q_{ij}$, a component in a  $Q$-matrix. The counting process for the molecular numbers, rescaled by $h=\frac{1}{N}$, is called the frequency process $\cv_t$. It can be represented as a random time-changed Poisson process that takes values on the discrete probability simplex $\mathcal{P}_h(V)$. Here $V$ is the collection of molecular species involved in the linear chemical reactions. In the thermodynamic limit $h \to 0$, the mean-field limit equation is a system of ordinary differential equations (ODE)  on the probability simplex $\mathcal{P}(V)$, which can also be regarded as the Kolmogorov forward equation of a continuous-time Markov chain with generator $Q$. Under the detailed balance assumption, this ODE can be symmetrized  by using a convex function as a gradient flow on $\mathcal{P}(V)$ with an Onsager response matrix $K$. More importantly, this response matrix $K$ induces a Riemannian metric $g$ on $\mathcal{P}(V)$, which can be interpreted as the discrete Wasserstein distance on the graph.
To introduce an appropriate diffusion approximation for $\cv_t$, we then study detailed properties of the induced Riemannian structure on $(\mathcal{P}(V), g)$, including the coordinate representation of the metric $g$, the volume form, and the extrinsic representation for the Laplace-Beltrami and Dirichlet form on the probability simplex.

In Section \ref{sec3}, we focus on constructing a diffusion approximation for the process $\cv_t$, leading to Langevin dynamics $X_t$ on the probability simplex $\mathcal{P}(V)$. This construction relies on a degenerate Brownian motion in $\mathbb{R}^{d-1}$ with diffusion coefficient constructed from the eigen-decomposition of Onsager's response matrix $K$. This multiplicative noise is introduced in the backward It\^o sense to ensure the invariant measure is the Gibbs measure $\pi_g = e^{-\frac{\psi^{ss}}{h}}\sqrt{|g|}$ with the original energy potential $\psi^{ss}$ and a volume element $\sqrt{|g|}$; see process $X_t$ in \eqref{528}. The corresponding Fokker-Planck equation on $\mathcal{P}(V)$ is exactly the drift-diffusion equation on $(\mathcal{P}(V), g)$ with the extrinsic representation of divergence and gradient; see \eqref{FP-rho}. Meanwhile, the generator of process $X_t$ is exactly the first variation of the Dirichlet form in the $L^2$ space weighted by the Gibbs measure $\pi_g$. When the potential function is taken to be zero, the Fokker-Planck equation reduces to the pure diffusion equation with the Laplace-Beltrami operator. We refer to this case as canonical Wasserstein diffusion \cite{WD}.
Our construction of the Langevin dynamics ensures the conservation of total mass. However, as with most degenerate diffusion processes on bounded domains, additional boundary conditions are needed to ensure the process stays within the bounded domain.

In Section \ref{sec4}, we study an example on a two-point state space $V=\{1,2\}$. We find a change of variable to convert the process $X_t$ to a standard Wright-Fisher process $Y_t$. Since the coefficients in the $Y_t$-process satisfy Feller's regular condition for the degenerate stochastic differential equation (SDE), we impose the zero-flux boundary condition for the Fokker-Planck equation to ensure the density function is mass-conserving. The fundamental solution of the canonical Wasserstein diffusion processes is computed in the zero-potential case.

The fluctuations of detailed-balanced stochastic processes in continuous states, known as Dean-Kawasaki dynamics \cite{Chavanis,Dean,KK}, have been widely studied in the literature. These works constructed an SPDE in terms of the empirical measure of a large interacting diffusion particle system. Mathematically, Dean-Kawasaki dynamics has been studied as a diffusion process on a probability space supported on a one-dimensional cycle \cite{WD} or in multiple dimensions \cite{Strum}.
 In \cite{WD,Strum}, the Dirichlet form in Wasserstein spaces has been introduced and used to compute the generator of the diffusion process. Meanwhile, \cite{WL} proposes a class of Wasserstein diffusions defined on a probability simplex set embedded with Wasserstein-2 metrics on discrete states, including finite state canonical Wasserstein diffusions. \cite{GL2022,MM,Renger} also present a class of diffusion approximations for general detailed balanced chemical reactions based on large deviations and Wasserstein-2-type metrics defined on a simplex set.

Compared to previous studies, this paper investigates the fluctuations of stochastic processes on finite states, coming from the diffusion approximation of the concentration  of molecular species in the linear chemical reactions in a large volume limit. We construct a drift-diffusion process on the probability simplex incorporating the geometric structure suggested by Onsager's response matrix in the gradient flow formulation. We  present a canonical Dirichlet form on the discrete Wasserstein-2 space. We also derive the diffusion approximation for general detailed balanced chemical reactions. This approach differs from the commonly used chemical Langevin equation \cite{Gill}; see details in Remark \ref{rem3.3}.

As observed in \cite{Renger}, the proposed stochastic process shares similarities in the fluctuation of the hydrodynamics of Ginzburg-Landau models \cite{Spohn,Yau}, where an anti-symmetric Brownian motion is defined on the edge set with a constant diffusion matrix. However, this paper studies a variable diffusion coefficients, constructed from Onsager's response matrix, associated with the designed drift vector fields depending on the Riemannian structure of the probability simplex.

It is  worth mentioning that stochastic processes on a population state are crucial in modeling and applications. Typical examples are stochastic differential equations or diffusions introduced by Wright \cite{Wright} and Fisher \cite{Fisher} in population genetics \cite{Feller50, EM,Jost}, which are critical in modeling the time evolution of allele frequencies with the interaction of biological/population behaviors. In its simplest version (a two-point state space) \cite{EWF},   the diffusion approximation of neutral Wright-Fisher model  studies the evolution of the relative frequencies of two alleles. In this paper, we build an interesting transformation between Wasserstein diffusion and Wright-Fisher diffusion on a two-point state space.

The organization of this paper is as follows. In Section \ref{sec2}, we construct a linear reaction process and review Onsager's gradient flows under detailed balance conditions. Then we study the Riemannian structure of the probability simplex endowed with the Riemannian metric $g$ induced by Onsager's response matrix. In Section \ref{sec3}, we construct a Wasserstein diffusion on the probability simplex and study the corresponding Fokker-Planck equation, which incorporates the geometric structure and a weighted Gibbs measure. In Section \ref{sec4}, we present an example of the Wasserstein drift diffusion process on a two-point space, establishing the transformation to the corresponding Wright-Fisher process and demonstrating the closed-form solutions of the corresponding Fokker-Planck equation with Feller's zero-flux boundary condition for the canonical Wasserstein diffusion process.


\section{Counting Process, Thermodynamic Limit and gradient flow with Riemannian structure on probability simplex}\label{sec2}

Before investigating the fluctuations on the probability simplex, we prepare some preliminary results on the stochastic process construction, gradient flow structure, and Riemannian structure. In Section \ref{sec2.1}, we first construct a stochastic process such that the thermodynamic limit (mean-field limit) is a linear ODE described by the $Q$-matrix. This ODE can also be regarded as a Kolmogorov forward equation for a continuous-time Markov chain on finite states with generator $Q$. In Section \ref{sec2.3}, under the detailed balance condition, we derive the gradient flow structure with Onsager's response matrix $K$. This response operator suggests the definition of a Riemannian metric on the probability simplex. We then study details for this Riemannian structure, including volume element, Laplace-Beltrami operator, and Dirichlet form on the probability simplex; see Sections \ref{sec2.4} and \ref{sec2.5}.

\subsection{Counting processes for linear chemical reactions in large volume}\label{sec2.1}

In this subsection, we construct a stochastic process via a random time-changed Poisson process for linear chemical reactions in a container with a total molecular number $N$. The resulting mean-field limit is a linear ODE described by the $Q$-matrix, which can also be regarded as a Kolmogorov forward equation for a continuous-time Markov chain on finite states with generator $Q$.

Let $Q$ be a $Q$-matrix satisfying the row sum zero 
\begin{equation*}
\sum_{j=1}^d Q_{ij} = 0, \qquad Q_{ij} \geq 0, \quad \text{for } j \neq i.
\end{equation*}
Take $Q$ as the reaction rate for the following  linear reversible chemical reactions 
\begin{equation}
V_i 
\quad \ce{<=>[Q_{ij}][Q_{ji}]} \quad
V_j
\end{equation}
with a total of $d$ molecular species ${V} = (V_{i})_{i=1, \cdots, d}$.

Now we describe the above chemical reaction process in terms of the molecular numbers $({C}_t)_{t \ge 0}$ as a counting process. Define the reaction vector for the reaction from $i$ to $j$ as $\vec{\nu}_{ij}=e_j - e_i$, where $e_i$ is the coordinate vector. Based on the law of mass action, the reaction rate for the reaction from $i$ to $j$ is $\Phi_{ij}({c})=Q_{ij} c_i.$
Then this counting process for molecular numbers $({C}_t)_{t \ge 0}$ is constructed via a random time-changed Poisson process
\begin{equation}  
\begin{aligned}
{C}_t = {C}_0 + \sum_{i,j=1, j \neq i}^d  \vec{\nu}_{ij} Y_{ij} \left( \int_0^t  \Phi_{ij}({C_s}) \ud s \right),
\end{aligned}
\end{equation}
where $Y_{ij}(t)$ are i.i.d. unit rate Poisson processes. 
For the linear chemical reactions, the mass conservation and the number of molecular conservation are the same. Indeed, for the mass vector $\vec{m}=(1,1,\cdots, 1)^T$, we have $\vec{\nu}_{ij} \cdot \vec{m}=0$. Thus we have 
\begin{equation}
    \sum_i C_i(t) = \sum_i C_i(0) =: N.
\end{equation}
Denote $h=\frac{1}{N}$, then the rescaled molecular number $\cv_t = h  {C}_t$ is on a discrete probability simplex $\sP_h(V)$ with grid size $h$
$$\cv_t \in \sP_h(V):= \{x^h \in (h\mathbb N)^d;  x_i^h \geq 0,\,\,  \sum_i x_i^h=1\}.$$
We denote $\sS_h=\sP_h(V)$.

The rescaled process satisfies
\begin{equation} \label{Csde}
\begin{aligned}
\cv_t = \cv_0 + \sum_{i,j=1, j \neq i}^d  h\vec{\nu}_{ij} Y_{ij} \left( \int_0^t \frac{1}{h} Q_{ij} X_i^h(s) \ud s \right).
\end{aligned}
\end{equation}
$(\cv_t)_{t \ge 0}$ is a $\sS_h$-valued stochastic process and will be referred to as the frequency process. Indeed one can see whenever $\cv_t$ is on the boundary of $\sS_h$, the reaction direction only points to the interior of $\sS_h$. Thus to ensure $\cv_t$ is a $\sS_h$-valued stochastic process, there is no additional boundary condition needed for linear reactions. However, for general reactions, additional no-reaction boundary condition needs to be imposed \cite{GL2022}. 

\subsection{Thermodynamic limit}
 
The thermodynamic limit (mean-field limit) as $h \to 0$ of process $\cv_t$ was proved by Kurtz \cite{kurtz1971}.
After taking $h \to 0$, we denote the macroscopic concentration variable as $x_t:= \lim_{h \to 0} \cv_t.$

$(x_t)_{t \ge 0}$ are in the probability simplex $\sP(V)$ defined as
$$x_t \in \sP(V):= \{x \in (0,1)^d;  x_i \geq 0,\,\,  \sum_i x_i=1\}.$$
We denote the probability simplex as $\sS:=\sP(V)$.
The mean-field limit $x_t$ satisfies an ODE for $x_t \in \sS$
\begin{equation}\label{RRE}
\dot{x}_i(t) = \sum_{j} Q_{ji} x_j(t).    
\end{equation} 

\subsection{Symmetrization via detailed balance and Onsager's response matrix $K$}\label{sec2.3}
Define the chemical reaction network as a directed weighted graph 
\begin{align}
    G=(V, E, Q), \quad V=\{V_1, V_2, \cdots, V_d\},\\
    Q=(Q_{ij})_{d \times d}, \quad  E=\{(i,j); \, i \neq j, \, Q_{ij}>0\}.
\end{align}

Since the graph is connected, there exists a unique positive eigenvector ${x}^s$ for $Q^\intercal$ such that $$Q^\intercal  {x}^s=0.$$
Assume the detailed balance condition holds
\begin{equation}\label{DB}
    Q_{ij} x^s_i = Q_{ji} x^s_j, \quad i,j = 1, \cdots, d.
\end{equation}
Then the reaction rate equation \eqref{RRE} is called chemical detailed balance.  

Denote the weight as
\begin{equation}
\omega_{ij}:= Q_{ij} x^s_i = \omega_{ji}.
\end{equation}
We still have the following property
\begin{equation}
\begin{aligned}
\omega_{ij} \geq 0, \,\,  \text{for } j \neq i, \quad \sum_j \omega_{ij} = 0,\,\, \sum_j \omega_{ji} = 0 \,\,\,  \text{for } i=1, \cdots, n.
\end{aligned}
\end{equation}
One has directly that $\omega$ is a nonpositive-definite matrix $\omega \in \mathbb{R}^{n \times n}$ satisfying
\begin{equation*} 
\sum_{j=1}^d \omega_{ij}=0, \quad  \omega_{ij}=\omega_{ji}, \quad \sum_{i,j=1}^n \xi_i \omega_{ij} \xi_j = -\frac{1}{2} \sum_{i \neq j} \omega_{ij} (\xi_i - \xi_j)^2 \leq 0 \quad \forall (\xi_i)_{i=1}^n \in \mathbb{R}^n.  
\end{equation*}

Using the detailed balance relation \eqref{DB} and symmetric $\omega$, equation \eqref{RRE} can be recast in a symmetric form
\begin{equation}\label{a}
   \frac{d x_i}{dt} = \sum_{j=1}^d \omega_{ij} \left(\frac{x_j}{x^s_j} - \frac{x_i}{x^s_i}\right) = \sum_{j=1}^d \omega_{ij} \frac{x_j}{x^s_j}. 
\end{equation}

\subsubsection{Variational Structure and Onsager Response Matrix $K$}
Below, we rewrite equation \eqref{a} as an Onsager's gradient flow. For any convex function $\phi(x)$, $\phi''> 0$, we have
\begin{equation}\label{ma1}
  \frac{\ud  x_i}{dt} =  \sum_{j=1}^d \omega_{ij} \theta_{ij}(x) \left[\phi'\left(\frac{x_j}{x^s_j}\right) - \phi'\left(\frac{x_i}{x^s_i}\right)\right],   
\end{equation}
where 
\begin{equation*}
    \theta_{ij}( x)=\theta\left(\frac{x_i}{x^s_i}, \frac{x_j}{x^s_j}\right), \quad \text{with} \quad \theta(s, t) := \frac{s-t}{\phi'(s)- \phi'(t)}.
\end{equation*}
We now recast the above Kolmogorov forward equation as Onsager's gradient flow form.
Define
\begin{equation}\label{Kmatrix}
 K_{ij}( x):= -\omega_{ij} \theta_{ij}( x), \,\, j \neq i, \quad K_{ii}( x) := -\sum_{j=1, j \neq i}^n K_{ij}( x).
\end{equation}
Notice $K(x):=(K_{ij}( x)): \sP(\mathcal{X}) \to \mathbb{R}^{n \times n}$
is a nonnegative definite matrix  satisfying
\begin{equation*}
 \sum_{j=1}^d K_{ij}( x)=0. 
\end{equation*}
Define free energy, also named $\phi$--divergence, on the finite probability spaces 
\begin{equation*}
 \psi^{ss}(x):=\mathrm{D}_{\phi}( x\| x^s) = \sum_{i=1}^n \phi\left(\frac{x_i}{x^s_i}\right) x^s_i.
\end{equation*} 
Then \eqref{ma1} can be recast as an Onsager's gradient flow
\begin{equation} \label{JKO0}
 \frac{\ud x_t}{\ud t} = - K( x_t) \nabla \psi^{ss}(x_t), 
\end{equation}
where $ - \nabla \psi^{ss}$ is the generalized force and $K$ is the Onsager's response matrix.
In terms of component wise, for $i=1,\cdots, d$, 
\[
 \frac{\ud  x_i}{\ud t} = - \sum_{j=1}^d K_{ij}( x) \phi'\left(\frac{x_j}{x^s_j}\right). 
\]
The energy dissipation law can be derived directly 
\begin{equation}
 \frac{\ud }{\ud t} \mathrm{D}_{\phi}( x_t\| x^s) = -  \sum_{i,j=1}^{d} K_{ij}(x) \phi'\left(\frac{x_i}{x^s_i}\right)\phi'\left(\frac{x_j}{x^s_j}\right) \leq 0. 
\end{equation}

\begin{example}[\cite{EM1,M}]  
Kullback--Leibler (KL) divergence corresponds to a particular choice $\phi(x)=x \log x-x+1$, $x \in \mathbb{R}^1$. 
Then the free energy function forms the KL divergence 
\begin{equation*}
 \psi^{ss}(\vx)=\mathrm{D}_{\mathrm{KL}}( x\| x^s)=\sum_{i=1}^n x_i \log \frac{x_i}{x^s_i} - x_i + x_i^s. 
\end{equation*}
and $\theta$ becomes the logarithmic mean function 
\begin{equation}\label{log}
\theta(s,t)=\frac{s-t}{\log s-\log t}, \quad \text{for any } s,t \in \mathbb{R}^1. 
\end{equation}

With this special choice of relative entropy and $\theta$, the $K$-matrix is consistent with the dissipative-conservative decomposition in
\cite[Theorem 3.2, Proposition 4.4]{GL2022}.
We remark that our analysis does not require this special choice of $\phi$--divergence. However, if one chooses this special energy, it represents the global energy landscape of chemical reactions. For detailed balanced nonlinear chemical reactions, the above Onsager's gradient flow still holds with a new matrix $K$; see Remark \ref{rem3.3}.
\end{example}

We  introduce a Riemannian metric on $\sS:=\sP(V)$ so that the above strong form of gradient flow can be recast as a weak form in terms of this metric
\begin{equation}
     g_{x}(\dot{{x}}_t, v) = - \langle \nabla \psi^{ss}({x}_t), v \rangle_{\mathbb{R}^d}, \quad \forall v \in \mathbb{R}^d \text{ with } \sum_{i=1}^d v_i = 0.
\end{equation}
Here the Riemannian metric $g_{x}: T_{x}{\sS} \times T_{x}{\sS} \to \mathbb{R}$ is defined as
\begin{equation}\label{g_d}
    g_{x}(u,v):= \langle u, K^{\dagger}(x) v \rangle_{\mathbb{R}^d}, \quad \forall u,v \in T_{x}{\sS},
\end{equation}
where $K^{\dagger}$ is the pseudoinverse of $K$.
We will see in Section \ref{sec3.4} the coordinate representation and volume element of this $g$.

\subsection{A Riemannian structure for the probability simplex}\label{sec2.4}
In this section, we present some lemmas and properties for the Riemannian metric $g$ introduced by the Onsager response matrix $K$ in \eqref{g_d}. It is known that $K(x)$  is degenerate in most cases for $x\in \pt \sS$, such as when $\theta$ is the logarithmic mean function \eqref{log}. Below, we study the Riemannian structure only for interior points  $x$ in the simplex.

Recall that $K$ is non-negative symmetric, and the sum of the rows is zero. Denote $(d-1)$ orthonormal eigenvectors $u^\ell(\vx) \in \mathbb{R}^d$ of matrix $K$ with positive eigenvalues as 
\begin{equation}\label{Eig}
K(\vx)u^\ell(\vx)=\lambda_\ell(\vx)u^\ell(\vx)  \quad \text{for $\ell=1,2,\cdots, d-1$.}
\end{equation}
$K$ also has an eigenvector
$e=\frac{1}{\sqrt{d}} (1, \cdots , 1)^T$ corresponding to the zero eigenvalue.
Thus $u^\ell$ are orthogonal to $e$. 
These eigenvectors form a $d \times d$ orthogonal matrix
\begin{equation}\label{Q}
  Q = ( u^1|u^2|\cdots|u^{d-1} | e). 
\end{equation}
Write \eqref{Eig} and orthogonality in component-wise, we have for $i=1,\cdots,d$,
\begin{equation}\label{component-wise}
\sum_{i,j=1}^d u_i^k K_{ij} u_j^\ell = \lambda_\ell \delta_{k\ell}, \quad \text{for $k, \ell=1,2,\cdots, d-1$.}
\end{equation}
Since $u^\ell$ is orthogonal to $e$, we know that
$\sum_{i=1}^d u^\ell_i =0$.
The pseudo-inverse $K^{\dagger}$ has a spectral decomposition
\begin{equation}\label{spectral}
K^\dagger = \sum_{\ell=1}^{d-1} \lambda_\ell^{-1} u^\ell \otimes u^\ell.
\end{equation}
Our manifold is the simplex set $\mathcal S = \{ (x^1, \cdots, x^{d-1}, x^d): 0 \leq  x^i \leq 1, i=1, \cdots, d; \sum_{i=1}^d x^i=1 \}$.
We parameterize $\mathcal S$ by $(x^1, \cdots, x^{d-1})$ as follows. Denote domain
$$\Omega_{d-1}:= \{0 < x_i < 1, i = 1, \cdots, d-1; 0 < \sum_{i=1}^{d-1} x_i < 1\} \subset \mathbb{R}^{d-1}.$$ 
Denote the coordinate map as $\varphi$, satisfying
\[
\varphi^{-1}: \Omega_{d-1}  \to \mathcal S, \quad (x^1, \cdots, x^{d-1}) \mapsto \vx = (x^1, \cdots, x^{d-1}, 1 - x^1 - \cdots - x^{d-1}). 
\]
Denote
\begin{equation}\label{Fnot}
F(x^1, \cdots, x^{d-1}) := (f \circ \varphi^{-1})(x^1, \cdots, x^{d-1}) = f(x^1, \cdots, x^{d-1}, 1 - x^1 - \cdots - x^{d-1}).
\end{equation}
For $i=1, \cdots, d-1$, the coordinate derivative can be represented as an extrinsic direction derivative in $\mathbb{R}^d$,   
\[
\frac{\partial}{\partial x^i} F = \tau_i \cdot \nabla f, \quad \tau_i = e_i - e_d.
\]
This gives a basis for the tangent plane $\left\{\frac{\partial}{\partial x^i}\right\}_{i=1}^{d-1}$.

\begin{lem}\label{lem2.2}
Let the Riemannian metric $g$ be defined in \eqref{g_d}. In terms of coordinates $x^1, \cdots, x^{d-1}$ in $\Omega_{d-1}$, we have the following representation for $(g_{ij}), (g^{ij})$, $i,j=1 \cdots, d-1$, and properties for $\det{g}$ and the volume form:
\begin{enumerate}[(i)]
    \item The Riemannian metric is positive-definite and $(g_{ij})_{i,j=1}^{d-1}$ has the following decomposition
\begin{equation}\label{g_metric}
  (g_{ij})_{i,j=1}^{d-1} = Q_{d-1} \Lambda_{d-1}^{-1} Q_{d-1}^\intercal, 
\end{equation}
where $\Lambda_{d-1}$ is a diagonal matrix consisting of positive eigenvalues of $K$ and $Q_{d-1}$ is a $(d-1) \times (d-1)$ matrix 
\[
Q_{d-1} = (q_{\ell, j})_{\ell, j=1}^{d-1}, \quad 
q_{\ell, j} = u_j^\ell - u_d^\ell,
\]
and $(g^{ij}) = K_{d-1}:= (K_{ij})_{i,j=1}^{d-1}$ are the first $(d-1) \times (d-1)$ block of $K$.
    \item $|g| := \text{det } g = \frac{d}{\lambda_1 \cdots \lambda_{d-1}}.$
    \item Define 1-form 
\begin{equation}\label{1form}
\omega^\ell = \sum_{i=1}^{d-1} \omega^{\ell}_i d x^i, \quad \omega^{\ell}_i = \frac{1}{\sqrt{\lambda_\ell}} (u^\ell_i - u^\ell_d).
\end{equation}
The volume form is given by 
\[
\text{vol}_g := \sqrt{|g|} \ud x^1 \wedge \cdots  \wedge \ud x^{d-1}
= \omega^1  \wedge \cdots  \wedge \omega^{d-1} = \frac{1}{\sqrt{\lambda_1 \cdots \lambda_{d-1}}} \ud \mathcal{H}^{d-1}(x),
\]
where $\ud \mathcal{H}^{d-1}(x)$ is the Hausdorff surface measure.
\end{enumerate}
\end{lem}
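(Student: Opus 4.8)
The plan is to reduce all three claims to the spectral data of $K$: the orthonormal eigenbasis $u^1,\dots,u^{d-1},e$ with positive eigenvalues $\lambda_1,\dots,\lambda_{d-1}$ and the zero eigenvalue for $e=\frac1{\sqrt d}(1,\dots,1)^\intercal$, together with the two structural relations that the orthogonality of $Q$ in \eqref{Q} yields: the completeness relation $\sum_{\ell=1}^{d-1}u^\ell_i u^\ell_j=\delta_{ij}-\frac1d$ and the zero-sum relation $\sum_{i=1}^d u^\ell_i=0$, equivalently $\sum_{i=1}^{d-1}u^\ell_i=-u^\ell_d$. Everything then becomes a matter of bookkeeping with $q_{\ell,i}=u^\ell_i-u^\ell_d$.

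For (i), I would insert the spectral decomposition \eqref{spectral} into $g_{ij}=g_x(\tau_i,\tau_j)=\langle \tau_i,K^\dagger\tau_j\rangle$ with $\tau_i=e_i-e_d$. Since $\langle\tau_i,u^\ell\rangle=u^\ell_i-u^\ell_d=q_{\ell,i}$, this gives at once $g_{ij}=\sum_{\ell=1}^{d-1}\lambda_\ell^{-1}q_{\ell,i}q_{\ell,j}$, which is exactly $\Lambda_{d-1}^{-1}$ conjugated by $Q_{d-1}$ as in \eqref{g_metric}. Positive semidefiniteness is immediate from this sum-of-squares form, and strict positivity follows from invertibility of $Q_{d-1}$ (equivalently $\det Q_{d-1}\neq0$, established in (ii)). To identify the inverse metric I would verify $\sum_{k=1}^{d-1}g_{ik}(K_{d-1})_{kj}=\delta_{ij}$ directly, writing $K_{kj}=\sum_m\lambda_m u^m_k u^m_j$ from the spectral expansion of $K$ itself. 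The inner sum over $k$ collapses: $\sum_{k=1}^{d-1}(u^\ell_k-u^\ell_d)u^m_k=(\delta_{\ell m}-u^\ell_d u^m_d)-u^\ell_d(-u^m_d)=\delta_{\ell m}$, where the cross terms cancel thanks to the zero-sum and completeness relations. A second application of completeness then returns $\delta_{ij}$. \emph{This collapse is the main obstacle}, since it is the one place where the zero-sum and completeness relations must be combined correctly and where a sign error would go unnoticed.

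For (ii), rather than expand $\det(g_{ij})$ directly I would take determinants in \eqref{g_metric} to get $\det(g_{ij})=(\det Q_{d-1})^2/(\lambda_1\cdots\lambda_{d-1})$, reducing the claim to $(\det Q_{d-1})^2=d$. To evaluate $\det Q_{d-1}$ I would start from the orthogonal matrix $Q=(u^1|\cdots|u^{d-1}|e)$ with $|\det Q|=1$ and perform the row operations ``row $i$ minus row $d$'' for $i=1,\dots,d-1$; these preserve the determinant, annihilate the last column except for the entry $1/\sqrt d$ in position $(d,d)$, and place $Q_{d-1}^\intercal$ in the top-left block. Expanding along the last column gives $1=|\det Q|=|\det Q_{d-1}|/\sqrt d$, hence $|\det Q_{d-1}|=\sqrt d$ and $\det(g_{ij})=d/(\lambda_1\cdots\lambda_{d-1})$.

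For (iii), the identity $\omega^1\wedge\cdots\wedge\omega^{d-1}=\det(\omega^\ell_i)\,\ud x^1\wedge\cdots\wedge\ud x^{d-1}$ together with $\omega^\ell_i=\lambda_\ell^{-1/2}q_{\ell,i}$ from \eqref{1form} gives $\det(\omega^\ell_i)=(\lambda_1\cdots\lambda_{d-1})^{-1/2}\det Q_{d-1}=\pm\sqrt{|g|}$ by (ii), which after fixing orientation is precisely $\sqrt{|g|}\,\ud x^1\wedge\cdots\wedge\ud x^{d-1}=\mathrm{vol}_g$. For the last equality I would compute the Euclidean surface measure of the flat simplex: the embedding $\varphi^{-1}$ has tangent frame $\tau_i$, so the induced metric is $\tilde g_{ij}=\langle\tau_i,\tau_j\rangle=\delta_{ij}+1$, i.e.\ $\tilde g=I_{d-1}+\mathbf 1\mathbf 1^\intercal$ with $\det\tilde g=1+(d-1)=d$ by the matrix determinant lemma. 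Hence $\ud\mathcal H^{d-1}=\sqrt d\,\ud x^1\wedge\cdots\wedge\ud x^{d-1}$, and substituting $\sqrt{|g|}=\sqrt d/\sqrt{\lambda_1\cdots\lambda_{d-1}}$ yields $\mathrm{vol}_g=(\lambda_1\cdots\lambda_{d-1})^{-1/2}\,\ud\mathcal H^{d-1}$, completing the chain of equalities.
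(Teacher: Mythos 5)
Your proposal is correct, and its overall architecture matches the paper's: you expand $g_{ij}=\langle \tau_i,K^\dagger\tau_j\rangle$ through the spectral decomposition \eqref{spectral} to obtain \eqref{g_metric}, compute $\det Q_{d-1}$ by subtracting the last row of $Q$ from the others and expanding along the last column, and evaluate $\omega^1\wedge\cdots\wedge\omega^{d-1}$ via $\det(\omega^\ell_i)$. You deviate in two local steps, both soundly. First, to identify $(g^{ij})=K_{d-1}$, the paper uses the row-sum-zero and symmetry of $K$ to restrict the quadratic-form identity $\sum_{i,j=1}^d(u^k_i-u^k_d)K_{ij}(u^\ell_j-u^\ell_d)=\lambda_\ell\delta_{k\ell}$ to the leading $(d-1)\times(d-1)$ block, obtaining the congruence \eqref{tm11} that diagonalizes $K_{d-1}$ by $Q_{d-1}$, and then inverts it; you instead verify $(g_{ij})K_{d-1}=I$ entrywise from the eigenbasis relations, never invoking the row-sum structure of $K$ in part (i). Your collapse $\sum_{k=1}^{d-1}(u^\ell_k-u^\ell_d)u^m_k=\delta_{\ell m}$ is correct, though note the first term comes from \emph{orthonormality} $\sum_{k=1}^d u^\ell_k u^m_k=\delta_{\ell m}$ rather than completeness; completeness $\sum_{\ell}u^\ell_iu^\ell_j=\delta_{ij}-\tfrac1d$ is genuinely needed only in your final step $\sum_\ell(u^\ell_i-u^\ell_d)u^\ell_j=\delta_{ij}$. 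The paper's congruence route is shorter and yields a reusable matrix identity; yours trades that for a self-contained verification using only the spectral data. Second, for $\ud\mathcal H^{d-1}=\sqrt d\,\ud x^1\cdots\ud x^{d-1}$ the paper uses the graph-area formula $\sqrt{1+|\nabla h|^2}$ with the height function $h=1-x^1-\cdots-x^{d-1}$, while you compute the Gram determinant $\det(I_{d-1}+\mathbf 1\mathbf 1^\intercal)=d$ of the embedding frame $\{\tau_i\}$; these are the same computation in different clothing. A minor point in your favor: in (ii) you work with $|\det Q|=1$, which is safer than the paper's assertion $\det Q=1$, since an orthogonal matrix can have determinant $-1$ unless the eigenvectors are signed or ordered to prevent it; only $(\det Q_{d-1})^2=d$ is needed, so your handling is the more careful one.
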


\begin{proof}
First, from the spectral decomposition \eqref{spectral} of $K^{\dagger}$ we have 
for any $a,b \in T_{\vx} \sS \subset \mathbb{R}^d$, $\sum_{i=1}^d a^i = \sum_{i=1}^d b^i=0$ and
\begin{align*}
 g(a,b) = & \langle a, K^\dagger b \rangle_{\mathbb{R}^d} = \sum_{i,j=1}^d a^i K_{ij}^\dagger b^j = \sum_{\ell=1}^{d-1} \lambda_\ell^{-1} \left(\sum_{i=1}^d a^i u^\ell_i \right) \left(\sum_{j=1}^d b^j u^\ell_j \right)\\
 =& \sum_{\ell=1}^{d-1} \lambda_\ell^{-1} \left(\sum_{i=1}^{d-1} a^i (u^\ell_i - u^\ell_d) \right) \left(\sum_{j=1}^{d-1} b^j (u^\ell_j - u^\ell_d) \right) \\
 =& \sum_{i,j=1}^{d-1} a^i b^j \sum_{\ell=1}^{d-1} (u^\ell_i - u^\ell_d) \lambda_\ell^{-1} (u^\ell_j - u^\ell_d)  \\
 =& \sum_{i,j=1}^{d-1} a^i b^j g_{ij}.
\end{align*}
Thus $(g_{ij})_{i,j=1}^{d-1} = Q_{d-1} \Lambda_{d-1}^{-1} Q_{d-1}^\intercal$ is the representation of $g$ in coordinates $x^1, \cdots, x^{d-1}$.

Next, from \eqref{component-wise} and the row sum zero and symmetry of $K$, we have
\begin{equation*}
\sum_{i,j=1}^d (u_i^k - u_d^k) K_{ij} (u_j^\ell - u_d^\ell) = \lambda_\ell \delta_{k\ell}, \quad 
\text{ and } \,\, 
\sum_{i,j=1}^{d-1} (u_i^k - u_d^k) K_{ij} (u_j^\ell - u_d^\ell) = \lambda_\ell \delta_{k\ell}. 
\end{equation*}
Then the above decomposition is recast as
\begin{equation}\label{tm11}
 Q_{d-1}^\intercal K_{d-1} Q_{d-1} = \Lambda_{d-1} : = \text{diag} (\lambda_1, \cdots, \lambda_{d-1}).
\end{equation}
Taking the inverse, one has 
\begin{equation}
   K_{d-1}^{-1} = Q_{d-1} \Lambda_{d-1}^{-1} Q_{d-1}^\intercal = (g_{ij})_{i,j=1}^{d-1}.  
\end{equation}
Hence $(g^{ij})_{i,j=1}^{d-1} = K_{d-1}$. 
Thus (i) is proved.

Second, we compute the determinant of $Q_{d-1}$ and thus $|g|$. 
Notice \eqref{g_metric} implies
\[
\text{det } g = \frac{(\text{det } Q_{d-1})^2}{\lambda_1 \cdots \lambda_{d-1}}.
\]
To compute $\text{det } Q_{d-1}$, we subtract the last row from each other row of $Q$ in \eqref{Q} to obtain the following matrix, whose determinant remains the same
\[
\begin{pmatrix}
    u^1_1 - u^1_d & \cdots & u^{d-1}_1 - u^{d-1}_d & 0 \\
    u^1_2 - u^1_d & \cdots & u^{d-1}_2 - u^{d-1}_d & 0 \\  
    \cdots \\
    u^1_{d-1} - u^1_d & \cdots & u^{d-1}_{d-1} - u^{d-1}_d & 0 \\
    u^1_d & \cdots & u^{d-1}_d & 1 / \sqrt{d} \\
\end{pmatrix}
=
\begin{pmatrix}
  Q_{d-1} & {0} \\
  u^1_d, \cdots, u^{d-1}_d & 1 / \sqrt{d}
\end{pmatrix}.
\]
Taking the determinant, we obtain
\[
1 = \text{det } Q = \frac{1}{\sqrt{d}} \text{det } Q_{d-1}.
\]
Hence we have
\[
\text{det } g = \frac{d}{\lambda_1 \cdots \lambda_{d-1}}.
\]
Therefore, conclusion (ii) is proved.

Third, we compute the volume form.
Recall the 1-form defined in \eqref{1form}
and a $(d-1)$ form
\[
\omega = \omega^1 \wedge \cdots \wedge \omega^{d-1}.
\]
We show that $\omega$ is the volume form. 
Indeed,
\[
\omega = \text{det} (\omega^{\ell}_i)_{\ell, i=1}^{d-1} \ud x^1 \wedge \cdots  \wedge \ud x^{d-1},
\]
and
\[
\text{det } (\omega^{\ell}_i)_{\ell, i=1}^{d-1} = \frac{\text{det} Q_{d-1}}{\sqrt{\lambda_1 \cdots \lambda_{d-1}}} = \sqrt{\frac{d}{\lambda_1 \cdots \lambda_{d-1}}} = \sqrt{|g|}. 
\]
Thus
\[
\text{vol}_g = \omega^1 \wedge \cdots \wedge \omega^{d-1} = \sqrt{|g|} \ud x^1 \wedge \cdots \wedge \ud x^{d-1} = \sqrt{|g|} \ud x^1 \cdots \ud x^{d-1},
\]
where $\ud x^1 \cdots \ud x^{d-1}$ is the Lebesgue measure on $\Omega_{d-1}$.
In the coordinate representation $(x^1, \cdots, x^{d-1})$, we have for the height function for the simplex $\sS$: $h(x^1, \cdots, x^{d-1}) = 1 - x^1 - \cdots - x^{d-1}$ and
\[
\ud \mathcal{H}^{d-1}(x) = \sqrt{1 + |\nabla h|^2} \ud x^1 \cdots \ud x^{d-1} = \sqrt{d} \ud x^1 \cdots \ud x^{d-1}.
\]
Thus, the volume form can be recast in terms of the Hausdorff surface measure for   $\sS$
\begin{equation}\label{volH}
 \text{vol}_g = \frac{\sqrt{|g|}}{\sqrt{d}} \ud \mathcal{H}^{d-1}(x) = \frac{1}{\sqrt{\lambda_1 \cdots \lambda_{d-1}}} \ud \mathcal{H}^{d-1}(x).   
\end{equation}
We complete the proof of (iii).
\end{proof}


\subsection{Laplace-Beltrami and Dirichlet form on the probability simplex}\label{sec2.5}
Before introducing an appropriate diffusion process on the probability simplex, we will first introduce an extrinsic representation for the Laplace-Beltrami operator and Dirichlet form on the probability simplex.

We first give an extrinsic representation of the grad operator and div operator. Recall grad operator for any $f\in C^1(\sS)$,
\[
 \grad_g f = \sum_{i,j=1}^{d-1} g^{ij} \frac{\partial f}{\partial x^j} \frac{\partial}{\partial x^i}.
\]
Then using the natural identical embedding $i: \sS \to \mathbb{R}^d$,
\begin{equation}\label{grad-f}
\begin{aligned}
i_{*} (\grad_g f) & = \sum_{i,j=1}^{d-1} g^{ij} \frac{\partial f}{\partial x^j} i_{*} \frac{\partial}{\partial x^i} 
= \sum_{i,j=1}^{d-1} \left( K_{ij} (e_j-e_d) \cdot \nabla f \right) (e_i-e_d) \\
& = \sum_{i=1}^{d} \left( \sum_{j=1}^{d} K_{ij} (e_j-e_d) \cdot \nabla f \right) (e_i-e_d) 
= K \nabla f. 
\end{aligned}
\end{equation}
And for $u \in T_x \sS$,
\begin{equation}\label{div-f}
\ddiv_g u = \frac{1}{\sqrt{|g|}} \sum_{j=1}^{d-1} \frac{\partial}{\partial x^j} \left( \sqrt{|g|} u^j \right) = \frac{1}{\sqrt{|g|}} \nabla \cdot ( \sqrt{|g|} i_{*} u).
\end{equation}

\subsubsection{Laplace-Beltrami Operator}
We also have the following extrinsic representation of the Laplace-Beltrami operator and Dirichlet energy.

\begin{lem}\label{lem_ex}
For any $f \in C^2(\sS)$, the intrinsic Laplace-Beltrami operator can be represented extrinsically as
\begin{equation}\label{LB}
\Delta_g f = \frac{1}{\sqrt{|g|}} \left[ \nabla \cdot \left( \sqrt{|g|} K \nabla f \right) \right],
\end{equation}
and the Dirichlet energy density can be represented extrinsically as
\begin{align}\label{geom}
g(\grad_g f, \grad_g f) = \langle \nabla f, K \nabla f \rangle_{\mathbb{R}^d}.
\end{align}
\end{lem}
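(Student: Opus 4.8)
The plan is to obtain both identities as immediate consequences of the extrinsic representations of $\grad_g$ and $\ddiv_g$ recorded in \eqref{grad-f} and \eqref{div-f}, together with the coordinate description $(g^{ij})_{i,j=1}^{d-1} = K_{d-1}$ from Lemma \ref{lem2.2}. The key structural observation is that the Laplace--Beltrami operator is nothing but the composition $\Delta_g f = \ddiv_g(\grad_g f)$, so once each factor is written extrinsically the formula \eqref{LB} falls out by substitution; the Dirichlet energy \eqref{geom} is then a parallel one-line computation.

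For the Dirichlet energy I would use the defining property of the Riemannian gradient, namely $g(\grad_g f, X) = X(f)$ for every tangent field $X$. Taking $X = \grad_g f$ and evaluating the directional derivative extrinsically through the embedding, $X(f) = \langle \nabla f, i_{*} X\rangle_{\mathbb{R}^d}$, I would substitute $i_{*}(\grad_g f) = K\nabla f$ from \eqref{grad-f} to get $g(\grad_g f, \grad_g f) = \langle \nabla f, K\nabla f\rangle_{\mathbb{R}^d}$, which is \eqref{geom}. As a cross-check one can argue purely in coordinates: by Lemma \ref{lem2.2}, $g(\grad_g f, \grad_g f) = \sum_{k,l=1}^{d-1} g^{kl}\,\partial_{x^k}F\,\partial_{x^l}F = \sum_{k,l=1}^{d-1} K_{kl}\,(\partial_k f - \partial_d f)(\partial_l f - \partial_d f)$, and expanding the products while using the row- and column-sum-zero property $\sum_{j=1}^d K_{kj} = \sum_{i=1}^d K_{id} = 0$ of the symmetric matrix $K$ collapses this back to $\sum_{i,j=1}^d K_{ij}\,\partial_i f\,\partial_j f = \langle\nabla f, K\nabla f\rangle_{\mathbb{R}^d}$.

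For the Laplace--Beltrami operator I would compose the two extrinsic formulas: applying \eqref{div-f} to the field $u = \grad_g f$ and then inserting $i_{*}(\grad_g f) = K\nabla f$ yields
\begin{equation*}
\Delta_g f = \ddiv_g(\grad_g f) = \frac{1}{\sqrt{|g|}}\,\nabla\cdot\bigl(\sqrt{|g|}\,i_{*}(\grad_g f)\bigr) = \frac{1}{\sqrt{|g|}}\,\nabla\cdot\bigl(\sqrt{|g|}\,K\nabla f\bigr),
\end{equation*}
which is exactly \eqref{LB}.

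The step that genuinely requires care --- and which I expect to be the main obstacle --- is the well-definedness of the right-hand sides, since $f$ lives on $\sS$ while $\nabla f$ refers to the ambient gradient of an arbitrary $C^2$ extension. Such an extension is determined only up to an additive normal component, i.e.\ up to adding a multiple of $e = \tfrac{1}{\sqrt d}(1,\dots,1)^\intercal$ to $\nabla f$. Because $Ke = 0$, both $K\nabla f$ and $\langle\nabla f, K\nabla f\rangle_{\mathbb{R}^d}$ are insensitive to this ambiguity (for the quadratic form one uses $\langle e, K\nabla f\rangle = \langle Ke, \nabla f\rangle = 0$), so the extrinsic expressions are intrinsic after all. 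The same remark underlies the equality of the two representations of $\ddiv_g$ in \eqref{div-f}: the ambient divergence sees only the tangential variation of $\sqrt{|g|}\,i_{*}u$ because the normal-direction contribution is annihilated by the degeneracy of $K$ along $e$. Verifying this consistency is the only non-mechanical part; everything else is substitution.
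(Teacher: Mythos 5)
Your proposal is correct, but it is organized differently from the paper's proof, and the difference is worth recording. The paper proves both identities by direct computation in the coordinates $(x^1,\dots,x^{d-1})$: it starts from the intrinsic formulas $\Delta_g f = \frac{1}{\sqrt{|g|}}\sum_{i,j=1}^{d-1}\partial_{x^i}\big(\sqrt{|g|}\,g^{ij}\partial_{x^j}F\big)$ and $g(\grad_g f,\grad_g f)=\sum_{i,j=1}^{d-1}g^{ij}\,\partial_{x^i}F\,\partial_{x^j}F$, substitutes $g^{ij}=K_{ij}$ from Lemma \ref{lem2.2} and $\partial_{x^i}F=(\partial_i-\partial_d)f$, and then uses symmetry and the row-sum-zero property of $K$ to extend the sums from $1,\dots,d-1$ to $1,\dots,d$, which collapses each $(\partial_i-\partial_d)$ to $\partial_i$. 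You instead obtain the lemma as a corollary of the pre-established extrinsic formulas: composing \eqref{grad-f} and \eqref{div-f} via $\Delta_g=\ddiv_g\circ\grad_g$ gives \eqref{LB} in one line, and the defining property $g(\grad_g f,X)=X(f)$ together with $i_{*}(\grad_g f)=K\nabla f$ gives \eqref{geom}. This is not circular, since \eqref{grad-f} is derived before the lemma (by exactly the index-extension trick the paper re-runs inside its proof), so the same computational content simply sits in a different place; your coordinate cross-check for \eqref{geom} is in fact literally the paper's argument. What your route buys is brevity and structure; what it costs is self-containedness, since it leans on the second equality in \eqref{div-f}, which the paper asserts without proof. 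For completeness you should verify it: writing $i_{*}u=\sum_{j=1}^{d-1}u^j(e_j-e_d)$, so $(i_{*}u)_d=-\sum_{j=1}^{d-1}u^j$, and taking any extension $w$ of $\sqrt{|g|}\,i_{*}u$ with $\sum_{k=1}^d w_k=0$, one gets $\nabla\cdot w=\sum_{j=1}^{d-1}(\partial_j-\partial_d)w_j=\sum_{j=1}^{d-1}\partial_{x^j}\big(\sqrt{|g|}\,u^j\big)$, which is $\sqrt{|g|}\,\ddiv_g u$.

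One small correction to your final paragraph: your discussion of extension-independence of $K\nabla f$ and $\langle\nabla f,K\nabla f\rangle_{\mathbb{R}^d}$ via $Ke=0$ is a genuine improvement over the paper, which is silent on this point. However, your stated reason for \eqref{div-f} --- that the normal contribution is ``annihilated by the degeneracy of $K$ along $e$'' --- is misplaced: \eqref{div-f} concerns an arbitrary tangent field $u$ and has nothing to do with $K$. What makes the ambient divergence there well defined is that $i_{*}u$ is tangential, i.e. $\sum_k(i_{*}u)_k=0$, and that it is differentiated only in tangential directions $\partial_j-\partial_d$, as the verification above shows. The kernel property $Ke=0$ (and $e^\intercal K=0$) enters only where you use it correctly: in making $K\nabla f$, and hence the right-hand sides of \eqref{LB} and \eqref{geom}, independent of the chosen $C^2$ extension of $f$.
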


\begin{proof}
For any $f \in C^\infty(\sS)$, recall $F$ in \eqref{Fnot}.

First, the Laplace-Beltrami operator is given by
\[
\Delta_g f = \frac{1}{\sqrt{|g|}} \sum_{i,j=1}^{d-1} \frac{\partial}{\partial x^i} \left( \sqrt{|g|} g^{ij} \frac{\partial F}{\partial x^j} \right)
= \frac{1}{\sqrt{|g|}} \left[ \sum_{i,j=1}^{d-1} (\partial_i - \partial_d) \left( \sqrt{|g|} K_{ij} (\partial_j - \partial_d) f \right) \right].
\]
Now we observe that
\begin{align*}
\sum_{i,j=1}^{d-1} (\partial_i - \partial_d) \sqrt{|g|} K_{ij} (\partial_j - \partial_d) f & = \frac{1}{\sqrt{|g|}} \sum_{i,j=1}^{d} (\partial_i - \partial_d) \left( \sqrt{|g|} K_{ij} (\partial_j - \partial_d) f \right) \\
& = \frac{1}{\sqrt{|g|}} \sum_{i,j=1}^{d} \partial_i \left( \sqrt{|g|} K_{ij} \partial_j \right) f = \frac{1}{\sqrt{|g|}} \nabla \cdot \left( \sqrt{|g|} K \nabla f \right).
\end{align*}
Here we used the row sum zero and symmetric properties of $K$. Thus we have the extrinsic representation \eqref{LB} of the Laplace-Beltrami operator.

Second, in the coordinates $(x^1, \cdots, x^{d-1})$, the Dirichlet energy $g(\grad_g f, \grad_g f)$ 
has the following representation
\begin{equation}
\begin{aligned}
g(\text{grad}_g f, \text{grad}_g f) = \sum_{i,j=1}^{d-1} g^{ij} \partial_i F \partial_j F. 
\end{aligned}
\end{equation}
Using an extrinsic representation, we have
\begin{align*}
g(\grad_g f, \grad_g f) &= \sum_{i,j=1}^{d-1} K_{ij} (e_i - e_d) \cdot \nabla f (e_j - e_d) \cdot \nabla f \\
&= \sum_{i,j=1}^{d} K_{ij} (e_i - e_d) \cdot \nabla f (e_j - e_d) \cdot \nabla f  
= \langle \nabla f, K \nabla f \rangle_{\mathbb{R}^d},
\end{align*}
where in the last equality we used the row sum zero property of $K$ again.
\end{proof}

\subsubsection{Dirichlet Form}
We now define the Dirichlet form for a function $f({x})$ on $\sS$. 

Recall the coordinates $(x^1, \cdots, x^{d-1})$ in domain $\Omega_{d-1}$ and the extrinsic representation of the Dirichlet energy density in Lemma \ref{lem_ex}. From \eqref{geom}, we have
\begin{align*}
& \int_{\Omega_{d-1}} g(\grad_g f, \grad_g f ) \, \sqrt{|g|} \ud x^1 \cdots \ud x^{d-1} \\
= & \sum_{i,j=1}^{d-1} \int_{\Omega_{d-1}} \partial_i F \sqrt{|g|} g^{ij} \partial_j F \ud x^1 \cdots \ud x^{d-1} \\
= & -\int_{\Omega_{d-1}} (f \Delta_g f) \circ \varphi^{-1} \sqrt{|g|} \ud x^1 \cdots \ud x^{d-1} + \sum_{i,j=1}^{d-1} \int_{\partial \Omega_{d-1}} F n_i K_{ij} \partial_j F \, \sqrt{|g|} \ud s,
\end{align*}
where $n = (n_1, \cdots, n_{d-1})$ is the unit outer normal of $\partial \Omega_{d-1}$.

We extend $n$ to $\mathbb{R}^d$ as $\vec{n} = (n_1, \cdots, n_{d-1}, 0)$ and use the row sum zero property of $K$, we have
\[
\sum_{i,j=1}^{d-1}     n_i K_{ij} \partial_j F = (\vec{n}^\intercal K \nabla f) \circ \varphi^{-1}. 
\]
The boundary term vanishes if one imposes the no-flux boundary condition
$$
(\vec{n}^\intercal K \nabla f) \circ \varphi^{-1} \, \sqrt{|g|} \Big|_{\partial \Omega_{d-1}} = 0.
$$
In terms of integration on the probability simplex, 
\begin{align*}
\int_{\sP(V)} g(\grad_g f, \grad_g f) \, \vol_g = -\int_{\sP(V)} f \Delta_g f \, \vol_g + \text{B.C.}
\end{align*}
Here $\Delta_g$ is the Laplace-Beltrami operator on the simplex set. 
In the two-point example in Section \ref{sec4}, we can see that the boundary term becomes zero.



\section{Fluctuations and Diffusion Approximation}\label{sec3}

After preparing the Riemannian structure on probability simplex in the last section, we now focus on the diffusion approximation for the process $(\cv_t)_{t\ge0}$, which leads to a $\sP(V)$-valued diffusion process. Here $V$ is a finite state space, and thus the process takes value on the probability simplex $\sS$. We identified the probability density as a vector on the simplex. The diffusion is constructed from a degenerate Brownian motion, motivated by the extrinsic representation of gradient and divergence on $\sS$. In Section \ref{sec3-FP}, we will first introduce the desired Fokker-Planck equation on $\sS$, which possesses a Gibbs measure on $\sS$. Then in Section \ref{sec3.4}, corresponding to the Fokker-Planck equation, we construct Langevin dynamics on $\sS$ through an appropriate degenerate Brownian motion. Finally, we discuss the relation between the generator and the weighted Dirichlet form on weighted $L^2(\sS, \pi_g)$.

\subsection{Diffusion approximation motivated by the Fokker-Planck equation on $\sS$}\label{sec3-FP}

Recall the $\sS_h$-valued process $(\cv_t)_{t\ge0}$ in \eqref{Csde}. To study the fluctuations at different levels, we first describe its law.
Denote $\vxv \in \sS_h$ as the discrete state variable on the probability simplex lattice with grid size $h$, indexed as
$$\ell=(\ell_1, \cdots, \ell_d)\in \mathbb{N}^d, \quad |\ell|:= \ell_1+\ell_2+\cdots+ \ell_d, \quad \vxv=h(\ell_1, \cdots, \ell_d).$$
Denote the time marginal law of $(\cv_t)_{t\ge0}$ as $$p_{\V}(t,\vxv) = \mathbb{E}(\mathbbm{1}_{\vxv}(\cv_t)) \in \sP(\sS_h)=\sP(\sP_h(V)) \subset \sP(h\mathbb{N}^d).$$ 
Then the law satisfies the Kolmogorov forward equation (chemical master equation)
\begin{equation}\label{rp_eq}
\begin{aligned}
\frac{\ud}{\ud t} p_{\V}(t,\vxv) =& \frac{1}{h}\sum_{(i,j)\in E} \left( Q_{ji}(x^h_{\ell_j}+ h) p_{\V}(\vxv- \vec{\nu}_{ji} h,t) - Q_{ij} x^h_{\ell_i}  p_{\V}(\vxv,t)\right) 
\end{aligned}
\end{equation}
for $\vxv \in \sS_h.$ 

To characterize the large fluctuations with exponential asymptotic behavior, we make a change of variable
\begin{equation}\label{1.3}
p_{\V}(t,\vxv) =  e^{-\frac{\psi_{\V}(t,\vxv)}{h}}, \quad p_{\V}(0,\vxv)=p_0(\vxv).
\end{equation}
Then the equation for $\psi_{\V}(t,\vxv)$ satisfies for $\vxv \in \sS_h,$
\begin{equation}\label{upwind00}
\begin{aligned}
\partial_t \psi_{\text{h}}(\vxv) + \sum_{(i,j)\in E } \left( Q_{ji}(x^h_{\ell_j}+ h) e^{\left(\frac{\psi_{\V}(\vxv) - \psi_{\V}(\vxv- \vec{\nu}_{ji} h)}{h}\right)} - Q_{ij} x^h_{\ell_i}  \right)=0.
\end{aligned}
\end{equation}
As $h\to 0$ and $\vx = \lim_{h\to 0} \vxv$,  
the Taylor expansion formally gives the continuous Hamilton–Jacobi equation (HJE) on the continuous probability simplex $\vx\in \sP(V)$,
\begin{equation}\label{HJE}
\partial_t \psi(\vx) + \sum_{(i,j)\in E} \left( Q_{ji} x_j e^{ \partial_i \psi(\vx) -\partial_j \psi(\vx) } - Q_{ij} x_i \right)=0.  
\end{equation}
One can directly check that the relative entropy
$\psi^{ss}(\vx) = \mathrm{D}_{\mathrm{KL}}({x} || \vx^s) := \sum_i \left( x_i \ln \frac{x_i}{x^{s}_i} - x_i + x^s_i \right)$
is a stationary solution to the HJE in the detailed balance case \eqref{DB}. The HJE provides an estimate for the large derivation rate function \cite{GL2023}.

On the other hand, to characterize the small fluctuations, one can construct a diffusion approximation for the process $\cv$ via several methods. One method is to study the law of the process $\cv$ and the approximation for the law $p_h$. A well-known procedure in this category is the Kramers–Moyal approximation for the Kolmogorov forward equation.
Another method is to study a quadratic approximation for the Hamiltonian in HJE \eqref{HJE}, which gives an equivalent diffusion approximation; see a comparison in \cite[Section 5.2]{GL2022}. At the process level, another method is to use the central limit approximation for the  random time-changed Poisson process to obtain a random time-changed Brownian motion \cite{EthierKurtz, Anderson}.

Below, we focus on another diffusion approximation method which incorporates the gradient flow structure \eqref{g_d} and the Riemannian metric induced by the Onsager matrix $K$.

We will construct a $\sS$-valued diffusion process $(X_t)_{t\ge0}$ and express its density $\rho(t,x)$ relative to the volume measure as below. For any test function $f(x)$, 
\[
\mathbb{E}(f(X_t)) = \int_{\sS} f(x) \rho(t,x) \vol_g.
\]
Recall the extrinsic representation of gradient and divergence on $\sS$ in \eqref{grad-f} and \eqref{div-f}. We aim to have the density $\rho$ satisfy the following Fokker-Planck equation on $\sS$
\begin{equation}\label{FP-rho}
\partial_t \rho = \frac{1}{\sqrt{|g|}} \nabla \cdot \left( \rho \sqrt{|g|} K \nabla \psi^{ss} \right) + h \frac{1}{\sqrt{|g|}} \nabla \cdot \left( \sqrt{|g|} K \nabla \rho \right).   
\end{equation}
Here, the drift term is given by the mean field limit ODE \eqref{JKO0} in the Onsager's gradient flow form \eqref{g_d}, and the diffusion term is given by the extrinsic representation of the Laplace-Beltrami operator \eqref{LB}. $h=\frac{1}{N}$ is the small parameter representing  fluctuations of the  stochastic process in the large volume.

A convenient way to construct the process is to use density $p=\rho \sqrt{|g|}$ relative to the surface Hausdorff measure so that the Itô calculus can be directly performed in the Euclidean space. We first prove the following lemma.

\begin{lem}
For the same process $(X_t)_{t \ge 0}$, if we represent the density using $p(t,x)$, 
\[
\mathbb{E}(f(X_t)) = \int_{\sS} f(x) p(t,x) \frac{1}{\sqrt{d}} \, \ud \mathcal{H}^{d-1}(x),
\]
then $p(t,\vx) = \rho(t,x) \sqrt{|g(x)|}$, $\vx \in \sS$, and the Fokker-Planck equation in terms of $p(t,\vx)$ is
\begin{equation}\label{FP-p}
\partial_t p = h \nabla \cdot \left( e^{-\frac{\psi^{ss}}{h}} \sqrt{|g|} K \nabla \left( \frac{p}{e^{-\frac{\psi^{ss}}{h}} \sqrt{|g|}} \right) \right).   
\end{equation}
\end{lem}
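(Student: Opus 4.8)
The plan is to handle the two assertions of the lemma in turn: first the pointwise identity $p = \rho\sqrt{|g|}$, then the equivalence of the two Fokker--Planck forms \eqref{FP-rho} and \eqref{FP-p}. Both are direct consequences of Lemma \ref{lem2.2} together with the chain rule, so the argument is a verification rather than a construction.

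For the identity $p = \rho\sqrt{|g|}$, I would compare the two integral representations of $\mathbb{E}(f(X_t))$. The density $\rho$ is defined relative to $\vol_g$, whereas $p$ is defined relative to $\frac{1}{\sqrt{d}}\ud\mathcal{H}^{d-1}$. Substituting the volume-form identity \eqref{volH} from Lemma \ref{lem2.2}(iii), namely $\vol_g = \frac{\sqrt{|g|}}{\sqrt{d}}\ud\mathcal{H}^{d-1}(x)$, I rewrite $\int_{\sS} f\rho\,\vol_g = \int_{\sS} f\,\rho\sqrt{|g|}\,\frac{1}{\sqrt{d}}\ud\mathcal{H}^{d-1}$. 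Since this must equal $\int_{\sS} f\,p\,\frac{1}{\sqrt{d}}\ud\mathcal{H}^{d-1}$ for every test function $f$, matching the integrands gives $p(t,x) = \rho(t,x)\sqrt{|g(x)|}$ at once.

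For the equation, the strategy is a one-line algebraic manipulation followed by a product-rule check. Because $\sqrt{|g|}$ is time-independent, multiplying \eqref{FP-rho} by $\sqrt{|g|}$ produces $\partial_t p = \nabla\cdot(\rho\sqrt{|g|}K\nabla\psi^{ss}) + h\nabla\cdot(\sqrt{|g|}K\nabla\rho)$. It then remains to show the right-hand side of \eqref{FP-p} equals this. Setting $\pi_g = e^{-\psi^{ss}/h}\sqrt{|g|}$ so that $p/\pi_g = \rho\,e^{\psi^{ss}/h}$, I apply the product rule to obtain $\nabla(\rho\,e^{\psi^{ss}/h}) = e^{\psi^{ss}/h}\bigl(\nabla\rho + \tfrac{\rho}{h}\nabla\psi^{ss}\bigr)$, and observe that the factor $e^{\psi^{ss}/h}$ cancels against the $e^{-\psi^{ss}/h}$ contained in $\pi_g$. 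The surviving quantity inside the divergence is $\sqrt{|g|}K\bigl(\nabla\rho + \tfrac{\rho}{h}\nabla\psi^{ss}\bigr)$; multiplying by the prefactor $h$ and expanding recovers exactly the drift term $\nabla\cdot(\rho\sqrt{|g|}K\nabla\psi^{ss})$ and the diffusion term $h\nabla\cdot(\sqrt{|g|}K\nabla\rho)$.

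I do not anticipate a genuine obstacle, since the computation is elementary; the only care needed is bookkeeping of the reciprocal Gibbs-weight exponentials so that they cancel cleanly, and confirming that the single prefactor $h$ correctly promotes the diffusion-scaled term $\tfrac{\rho}{h}K\nabla\psi^{ss}$ back to the unscaled gradient-flow drift $\rho K\nabla\psi^{ss}$ of \eqref{FP-rho}. This cancellation is precisely the content of the backward-It\^o/Gibbs-measure design: \eqref{FP-p} is the manifestly self-adjoint detailed-balance form for which $p = \pi_g$ is the stationary solution (the right-hand side vanishing when $p/\pi_g$ is constant), and verifying the equivalence with \eqref{FP-rho} amounts to checking that the prescribed drift and the Gibbs weight $\pi_g$ are mutually compatible.
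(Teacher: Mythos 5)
Your proposal is correct and takes essentially the same approach as the paper: the identity $p=\rho\sqrt{|g|}$ follows from matching integrands via the volume-form formula \eqref{volH} of Lemma \ref{lem2.2}, and the equivalence of \eqref{FP-rho} and \eqref{FP-p} rests on the same product-rule/cancellation identity, which the paper verifies by factoring the drift and diffusion terms of \eqref{FP-rho} into the self-adjoint form (via $K\nabla\rho=\rho K\nabla\log\rho$) while you expand \eqref{FP-p} back into those two terms. The direction of the computation is immaterial, so there is no gap.
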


\begin{proof}
First, from \eqref{volH} and $p=\rho \sqrt{|g|}$, the density $p$ of the process $(X_t)_{t\ge 0}$ with respect to the Hausdorff measure satisfies
\begin{align*}
\mathbb{E}(f(X_t)) & = \int_{\sS} f(x) \rho(t,x) \vol_g
= \int_{\sS} f(x) p(t,x) \frac{1}{\sqrt{d}} \ud \mathcal{H}^{d-1}(x) \\
\end{align*}
Second, from $p=\rho \sqrt{|g|}$, the Fokker-Planck equation \eqref{FP-rho} becomes
\begin{align*}   
\partial_t (\rho \sqrt{|g|}) = \nabla \cdot \left( \rho \sqrt{|g|} K \nabla \psi^{ss} \right) + h \nabla \cdot \left( \rho \sqrt{|g|} K \nabla \log \rho \right)
= h \nabla \cdot \left( e^{-\frac{\psi^{ss}}{h}} \sqrt{|g|} K \nabla \left( e^{\frac{\psi^{ss}}{h}} \rho \right) \right),
\end{align*}
which, in terms of $p$, becomes \eqref{FP-p}.
\end{proof}

For the above Fokker-Planck equation in terms of $p$, the invariant measure is the Gibbs measure 
\[
\pi_g = e^{-\frac{\psi^{ss}}{h}} \sqrt{|g|} = \exp\left(-\frac{\psi^{ss} - \tfrac{1}{2} h \log |g|}{h}\right)
\]
with the potential $\psi^{ss}$ and a volume element $\sqrt{|g|}$. We incorporate the volume element contribution into the potential $\psi^{ss}$ as follows 
\begin{equation}\label{psihh}
\hat{\psi}^{ss} = \psi^{ss} - \tfrac{1}{2} h \log |g|.
\end{equation}
We will derive the diffusion approximation in terms of  $\hat{\psi}^{ss}$, which includes the volume element contribution.
Thus, we have
\begin{equation}\label{FPK}
\partial_t p = \nabla \cdot \left( p K(\vx) \nabla \hat{\psi}^{ss} + h K \nabla p \right) = h \nabla \cdot \left( e^{-\frac{\hat{\psi}^{ss}}{h}} K \nabla \left( p e^{\frac{\hat{\psi}^{ss}}{h}} \right) \right).
\end{equation}
With the Fokker-Planck equation in terms of $p$, we will construct the Langevin dynamics on $\sS$ in the next subsection.

\subsection{Construction of Langevin dynamics on $\sS$ for the diffusion approximation}\label{sec3.4}

Let $K$ be the Onsager response matrix, which is non-negative and symmetric, with the sum of its rows equal to zero. Recall the $(d-1)$ orthonormal eigenvectors $u^\ell(\vx)\in\mathbb{R}^d$ of matrix $K$ with positive eigenvalues as 
\begin{equation*} 
K(\vx)u^\ell(\vx)=\lambda_\ell(\vx)u^\ell(\vx)  \quad \textrm{for $\ell=1,2,\cdots, d-1$.}
\end{equation*}
Then $K$ can be decomposed using a $d\times (d-1)$ matrix $\sigma$,
\[
K= \sigma \sigma^\intercal, \quad \sigma = ( \sqrt{\lambda_1} u^1|\sqrt{\lambda_2}u^2|\cdots|\sqrt{\lambda_{d-1}}u^{d-1}) 
\]
with $u^\ell$ satisfying the mean zero condition $\sum_{i=1}^d  u^\ell_i =0$.

We use the backward it\^o integral to construct a $\sS$-valued drift-diffusion process
\begin{equation}\label{df11}
\ud  {X}_t  =    -K({X}_t ) {\nabla \hat{\psi}^{ss}({X}_t )}   \ud t + \sqrt{2h} \sigma({X}_t ) \, \widehat{\ud} B_t,
\end{equation}
where $\widehat{\ud} B_t$ denotes the multiplicative noise in the backward It\^o integral sense \cite{kunitha1982backward}. In the standard forward It\^o integral sense, this reads
\begin{equation}\label{528}
\ud  {X}_t   = -  K({X}_t ) {\nabla \hat{\psi}^{ss}({X}_t )} \ud t + h\nabla \cdot K({X}_t ) \ud t + \sqrt{2 h}\sigma({X}_t)\ud B_t,
\end{equation}
where $B_t$ is a Brownian motion in $\mathbb{R}^{d-1}$.

Using the symmetry and row sum zero property of $K_{ij}$, we can recast the drift in antisymmetric form. The above SDE, written component-wise, is
\begin{equation}\label{SFPKnew1}
\begin{split}
\ud X_i(t)=& \sum_{j=1, j\ne i}^d \left(-K_{ij} (\partial_j-\partial_i)\hat{\psi}^{ss} + h(\partial_j-\partial_i) K_{ij} \right) \ud t + \sqrt{2h}\sum_{\ell=1}^{d-1}\sqrt{\lambda_\ell} u^\ell_i \ud B_{\ell},
\end{split}
\end{equation}
where $B_{\ell}$ for $\ell=1, \cdots, d-1$ are independent Brownian motions. Using the Itô calculus, the Fokker-Planck equation for \eqref{528} is exactly the same as \eqref{FPK}.

Due to the fact that $\sum_{i=1}^d u^\ell_i = 0$ for $\ell=1, \cdots, d-1$, we know that 
$$\sum_{i=1}^d X_i(t) = \sum_{i=1}^d X_i(0) = 1$$ 
is conserved. However, the Brownian motion in \eqref{SFPKnew1} cannot guarantee the positivity of $X_t$. This is one of the central questions in probability.
An additional boundary condition is needed to ensure that the SDE \eqref{SFPKnew1} is a $\sP(V)$-valued diffusion process. One method is to use a reflecting boundary condition; see \cite{McKean} for a one-dimensional construction and \cite{Tanaka} for the case of a convex domain in multiple dimensions.
In Section \ref{sec4.1}, for the two-point case, i.e., $d=2$, we discuss the well-posedness in terms of the Fokker-Planck equation with a zero-flux boundary condition.

We also remark that there is another SDE constructed via edge-Brownian motion, which will lead to the same Fokker-Planck equation \eqref{FPK}. Let $B_{ij}$ for $i<j$ be independent Brownian motions on the edge $(i,j)$. For $j<i$, set $B_{ij}=-B_{ji}$. Then we introduce
\begin{equation}\label{SFPKnew2}
\begin{split}
\ud X_i(t)=& \sum_{j=1, j\ne i}^d \left(-K_{ij} (\partial_j-\partial_i)\hat{\psi}^{ss} + h(\partial_j-\partial_i) K_{ij} \right) \ud t + \sqrt{2h}\sum_{j=1, j\neq i}^d\sqrt{|K_{ij}|}\ud B_{ij},
\end{split}
\end{equation}
where $|\cdot|$ denotes the absolute value function. We refer to \cite{Yau}, where a non-degenerate edge Brownian motion was first proposed for a periodic lattice domain.

Next, we will prove that the Fokker-Planck equation of SDE \eqref{SFPKnew2} is exactly \eqref{FPK}. 
\begin{lem}
The Fokker-Planck equation corresponding to SDE \eqref{SFPKnew2} is \eqref{FPK}.
\end{lem}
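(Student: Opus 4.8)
The plan is to compute the forward Kolmogorov (Fokker--Planck) equation of \eqref{SFPKnew2} directly from its Itô drift and diffusion coefficients, via the standard formula
\[
\partial_t p = -\sum_{i=1}^d \partial_i(b_i p) + \tfrac12 \sum_{i,k=1}^d \partial_i\partial_k(a_{ik} p),
\]
where $b$ is the drift vector and $a_{ik}\,\ud t = \ud[X_i, X_k]$ is the diffusion matrix read off from the quadratic covariation of the martingale part. The crucial observation is that the drift in \eqref{SFPKnew2} is \emph{identical} to the drift in \eqref{528}, whose Fokker--Planck equation has already been shown to be \eqref{FPK}. Hence it suffices to prove that the edge-Brownian-motion noise in \eqref{SFPKnew2} produces exactly the same diffusion matrix as the noise $\sqrt{2h}\,\sigma\,\ud B_t$ in \eqref{528}, namely $a = 2hK = 2h\,\sigma\sigma^\intercal$.

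The main step is therefore the quadratic covariation computation. Writing the noise of $X_i$ as $\sqrt{2h}\sum_{j\neq i}\sqrt{|K_{ij}|}\,\ud B_{ij}$ and recalling the orientation convention $B_{ij} = -B_{ji}$, the edge Brownian motions satisfy $\ud[B_{ij}, B_{kl}] = \ud t$ when $(i,j)=(k,l)$, $\ud[B_{ij}, B_{kl}] = -\ud t$ when $(i,j)=(l,k)$, and $0$ otherwise. Expanding
\[
\ud[X_i, X_k] = 2h \sum_{j\neq i}\sum_{l\neq k} \sqrt{|K_{ij}|}\,\sqrt{|K_{kl}|}\,\ud[B_{ij}, B_{kl}]
\]
leaves only the matched-edge terms: when $i=k$ the diagonal matches $j=l$ survive and give $a_{ii} = 2h\sum_{j\neq i}|K_{ij}|$, while when $i\neq k$ the single reversed match $(j,l)=(k,i)$ survives and, because of its $-1$ covariation, gives $a_{ik} = -2h\sqrt{|K_{ik}|}\,\sqrt{|K_{ki}|}$.

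To finish I would invoke the structure of $K$ from \eqref{Kmatrix}: its off-diagonal entries are nonpositive, $K_{ij}\le 0$ for $j\neq i$, so $|K_{ij}| = -K_{ij}$; moreover $K$ is symmetric and has zero row sums, so $\sum_{j\neq i}|K_{ij}| = -\sum_{j\neq i}K_{ij} = K_{ii}$. Substituting these gives $a_{ii} = 2hK_{ii}$ and $a_{ik} = 2hK_{ik}$ for $i\neq k$, i.e.\ $a = 2hK$, matching the diffusion matrix of \eqref{528}. With identical drift and diffusion matrix, the two SDEs share the same generator and hence the same Fokker--Planck equation, which is \eqref{FPK}. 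The one delicate point---and the place I expect to need the most care---is the sign bookkeeping forced by the antisymmetry $B_{ij}=-B_{ji}$: the reversed-edge covariation supplies a factor $-1$ that, combined with $|K_{ik}| = -K_{ik}$, is precisely what restores the correct (negative) off-diagonal entries $a_{ik}=2hK_{ik}$, rather than the wrong-sign value $-2hK_{ik}$ one would obtain by ignoring the orientation.
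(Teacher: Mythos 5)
Your proof is correct and rests on the same key computation as the paper's: the quadratic covariation bookkeeping $\ud[X_i,X_k]=2hK_{ik}\,\ud t$, in which the antisymmetry $B_{ij}=-B_{ji}$ and the sign structure $K_{ij}\le 0$ for $j\neq i$ (together with symmetry and zero row sums of $K$) convert the $|K_{ij}|$ factors into the correct signed entries of $K$. The only difference is organizational: the paper feeds these covariations into It\^o's formula applied to a test function, takes expectations, and integrates by parts to re-derive \eqref{FPK} directly, whereas you conclude by matching the drift and diffusion matrix with those of \eqref{528}, whose Fokker--Planck equation the paper has already identified as \eqref{FPK}; both routes are valid and essentially equivalent.
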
 
\begin{proof}
We first split the Brownian motion term into two parts
\begin{equation} 
\begin{split}
\ud X_i(t) =& \sum_{j=1, j\ne i}^d \left(-K_{ij} (\partial_j-\partial_i)\hat{\psi}^{ss} + h(\partial_j-\partial_i) K_{ij} \right) \ud t \\
&-\sqrt{2h} \sum_{j=1}^{i-1}\sqrt{|K_{ij}|}\ud B_{ji}
+\sqrt{2h} \sum_{j=i+1}^{d}\sqrt{|K_{ij}|}\ud B_{ij}.
\end{split}
\end{equation}

Then, by It\^o's formula, we compute the drift and diffusion terms.
For any test function $f \in C^2_b(\mathbb{R}^d)$, from  It\^o's formula
\begin{align*}
\ud f(X)=\sum_{n=1}^d \partial_n f \ud X_n + \frac{1}{2} \sum_{n,m=1}^d \partial_{nm} f \ud X_n \ud X_m.
\end{align*}
The quadratic term can be simplified as follows. For $n < m$, we have
\begin{align*}
\ud X_n \ud X_m = -2h |K_{nm}|\ud t = 2h K_{nm} \ud t.
\end{align*}
Similarly, for $n > m$, we have
\begin{align*}
\ud X_n \ud X_m = -2h |K_{nm}| \ud t = 2h K_{nm} \ud t.
\end{align*}
For $n = m$, we have
\begin{align*}
(\ud X_n)^2 = 2h \sum_{\ell \ne n} |K_{n\ell}| \ud t = 2h K_{nn} \ud t.
\end{align*}
Thus,
\begin{align*}
\frac{1}{2} \sum_{n,m=1}^d \partial_{nm}f  \ud X_n \ud X_m &= \frac{1}{2} \sum_{m, n; n \ne m} \partial_{nm} f  \ud X_n \ud X_m + \frac{1}{2} \sum_{n=1}^d \partial_{nn} f (\ud X_n)^2 \\
&= h \sum_{m, n; n \ne m} \partial_{nm} f K_{nm} \ud t + h \sum_{n=1}^d \partial_{nn}f  K_{nn} \ud t \\
&= h \sum_{n, m=1}^d \partial_{nm} f  K_{nm} \ud t. 
\end{align*}
Combine this with
\begin{align*}
\sum_{n=1}^d \partial_n f  \ud X_n = \sum_{n=1}^d \partial_n f  \left(- \sum_{\ell=1}^d K_{n\ell} \partial_\ell \hat{\psi}^{ss} + h\partial_\ell K_{n\ell} \right) \ud t + \mathcal{M}_t,
\end{align*}
where $\mathcal{M}_t$ is a martingale. Taking expectations and integrating by parts, we finish the proof.
\end{proof}

\subsection{Generator and weighted Dirichlet form}

The generator for the above diffusion process $(X_t)_{t\ge0}$ is given by
\[
L(f) = h e^{\frac{\psi^{ss}}{h}} \ddiv_g \left(e^{\frac{-\psi^{ss}}{h}} \grad_g f \right) 
= - g(\grad_g \psi^{ss}, \grad_g f) + h \ddiv_g \grad_g f.
\]
\textsc{Von Renesse and Sturm} introduced  a Wasserstein Dirichlet form \cite{WD} such that the generator for the diffusion process is the first variation of the Dirichlet form in a weighted $L^2$ space. In the current context, the weighted $L^2$ space is
\[
L^2(\sS, e^{-\frac{\psi^{ss}}{h}}\sqrt{|g|})= L^2(\sS, e^{-\frac{\hat{\psi}^{ss}}{h}} \mathcal{H}^{d-1}),
\]
and the Dirichlet form is given by
\[
\mathcal{E}(f) = \frac{h}{2} \int_{\sS} g(\grad_g f, \grad_g f) e^{-\frac{\psi^{ss}}{h}} \vol_g.
\]
The first variation of $\mathcal{E}(u)$ with respect to any compact support perturbation function $\tilde f$ is
\begin{align*}
\frac{d}{d\varepsilon}\big|_{\varepsilon=0} \mathcal{E}(f + \varepsilon \tilde f) &= h \int_{\sS} g(\grad_g f, \grad_g \tilde f) e^{-\frac{\psi^{ss}}{h}} \vol_g \\
&= h \int_{\sS} g(e^{-\frac{\psi^{ss}}{h}} \grad_g f, \grad_g \tilde f) \vol_g \\
&= -h \int_{\sS} \ddiv_g \left(e^{-\frac{\psi^{ss}}{h}} \grad_g f \right) \tilde f \vol_g \\
&= -h \int_{\sS} e^{\frac{\psi^{ss}}{h}} \ddiv_g \left(e^{-\frac{\psi^{ss}}{h}} \grad_g f \right) \tilde f e^{-\frac{\psi^{ss}}{h}} \vol_g.
\end{align*}
This recovers the generator operator
\[
L(f) = h e^{\frac{\psi^{ss}}{h}} \ddiv_g \left(e^{\frac{-\psi^{ss}}{h}} \grad_g f \right) 
= - g(\grad_g \psi^{ss}, \grad_g f) + h \ddiv_g \grad_g f.
\]
In the extrinsic representation, we have
\[
L(f) = \la K \nabla \psi^{ss}, \nabla f \ra_{\mathbb{R}^d} + h \frac{1}{\sqrt{|g|}} \nabla \cdot (\sqrt{|g|} K \nabla f).
\]

\begin{remark}\label{rem3.3}
Beyond the linear chemical reaction discussed above, for general chemical reactions, the reactions are indexed as $r=1:R$. For the $r$-th reaction, the reaction vectors are $\nu^r$, and the reaction rate $\Phi_r(\vx)$ is generally given by the law of mass action \cite{GL2022}. With detailed balance condition for general chemical reactions \cite[Proposition 4.4]{GL2022}, one will have the Onsager's matrix $K$  
\begin{equation}
K(x) = \sum_{r=1}^R \Lambda(\Phi_r^+(\vx), \Phi_r^-(\vx)) \vec{\nu}^r \otimes \vec{\nu}^r, \quad \text{ in detail } K_{ij}(x) = \sum_{r=1}^R \Lambda(\Phi_r^+(\vx), \Phi_r^-(\vx)) \nu^r_i \nu^r_j,
\end{equation}
where  
$\Lambda(x,y) = \frac{x-y}{\log x - \log y}.$
Thus, the SDE \eqref{SFPKnew2} with this matrix $K$ formulates as
\begin{equation*}
\begin{split}
\ud X_i(t)=& \sum_{j=1, j\ne i}^d \sum_{r=1}^R \left(-\Lambda(\Phi_r^+({X}_t), \Phi_r^-({X}_t)) \nu^r_i \nu^r_j (\partial_j-\partial_i) \hat{\psi}^{ss}({X}_t) + h (\partial_j-\partial_i) \Lambda(\Phi_r^+({X}_t), \Phi_r^-({X}_t)) \nu^r_i \nu^r_j \right) \ud t \\
&+\sqrt{2h} \sum_{j=1, j\neq i}^d \sqrt{|\sum_{r=1}^R \Lambda(\Phi_r^+({X}_t), \Phi_r^-({X}_t)) \nu^r_i \nu^r_j|} \ud B_{ij}.
\end{split}
\end{equation*}
We name the above SDE the {\em Wasserstein-Chemical Langevin equations}. We will leave its analysis and modeling for future work. 
\end{remark}


\section{Analysis of canonical Wasserstein diffusion on two-point simplex $\sP(\{1,2\})$}\label{sec4}
In this section, we study the existence, the conversion to the one-dimensional Wright-Fisher process, and the explicit solutions of the SDE \eqref{528} on the probability simplex in a two-point state space case.

\subsection{Two-point simplex example: a Langevin dynamics and Fokker-Planck equation on $\sP(\{1,2\})$}
Consider a two-point graph $I=\{1,2\}$, where the stationary solution is given by $x_1^{s}=x_2^{s}=\frac{1}{2}$. Hence $\omega_{12}=\omega_{21} = Q_{12} x_1^{s} >0$. Denote $k=-K_{12}=\theta_{12}({x}_1,x_2)\omega_{12}>0$. Then the Onsager response matrix is given by
\begin{equation*}
K=\begin{pmatrix}
k & -k\\
-k & k  
\end{pmatrix},
\end{equation*} 
with the positive eigenvalue being $\lambda_1=2k$. $K$ has a simple decomposition 
\[
K = \sigma \sigma^\intercal, \quad \sigma = \frac{\sqrt{\lambda_1}}{\sqrt 2} \begin{pmatrix} 1\\
-1  
\end{pmatrix}
= \sqrt{k}\begin{pmatrix}
1\\
-1  
\end{pmatrix}.
\]
Then SDE \eqref{528} reads
 \begin{equation}\label{eg1}
 \ud \begin{pmatrix}
X_1(t)\\
X_2(t) 
\end{pmatrix}
= - K \nabla \hat{\psi}^{ss}(X_1, X_2) \ud t + h \nabla\cdot K \ud t  + \sqrt{2h} \sigma \ud B_t.    
 \end{equation}
Clearly, we have the conservation law $X_1(t)+X_2(t)=X_1(0)+X_2(0)=1$. Denote $X_t=X_1(t)$ and $\hat{V}(x) = \hat{\psi}^{ss}(x, 1-x)$. We denote $\theta(x)=\theta_{12}(x, 1-x)$ and $\omega=\omega_{12}$. Then, the above equation is reduced to a one-dimensional SDE
\[
\ud X_t = - \omega \theta(X_t) \hat{V}'(X_t) \ud t + h \omega \theta'(X_t) \ud t + \sqrt{2h\omega \theta(X_t)} \ud B_t.
\]
Recall the formula for $\hat{\psi}^{ss}$ in \eqref{psihh} and the volume formula in Lemma \ref{lem2.2}, we have 
\[
\det g = \frac{2}{\lambda_1}=\frac{2}{2k} = \frac1{\omega \theta},
\]
and 
$$\hat V= V + \frac{h}{2} \log (\omega\theta), \quad \hat V'(x)= V'(x) + \frac{h}{2} \frac{\theta'(x)}{\theta(x)}.$$ Plugging this into \eqref{eg1}, we obtain
\begin{equation}\label{SDE2}
 \ud X_t = - \omega \theta(X_t) V'(X_t) \ud t + \frac{h}{2} \omega \theta'(X_t) \ud t + \sqrt{2h\omega \theta(X_t)} \ud B_t.   
\end{equation}

The Fokker-Planck equation of SDE \eqref{SDE2} satisfies
\begin{equation}\label{FP1}
\begin{split}
\partial_t p(t,x)
=&h\omega\partial_x\Big( p(t,x)\theta(x)\partial_x\log\frac{p(t,x)}{e^{-\frac{1}{h}V(x)}\theta(x)^{-\frac{1}{2}}}\Big)\\
=&h\omega\partial_x\Big(\theta(x)^{\frac{1}{2}}e^{-\frac{1}{h} V(x)}\partial_x[p(t,x)e^{\frac{1}{h}V(x)}\theta(x)^{\frac{1}{2}}]\Big), 
\end{split}
\end{equation}
with $p(0,x)=p_0(x)$ being a given initial distribution. We will impose the zero-flux boundary conditions in the Fokker-Planck equation \eqref{FP1}  
\begin{align}
&\lim_{x\to 0^+} \theta(x)^{\frac{1}{2}}e^{-\frac{1}{h} V(x)}\partial_x[p(t,x)e^{\frac{1}{h}V(x)}\theta(x)^{\frac{1}{2}}]=0, \label{BC11} \\
&\lim_{x\to 1^-}\theta(x)^{\frac{1}{2}}e^{-\frac{1}{h} V(x)}\partial_x[p(t,x)e^{\frac{1}{h}V(x)}\theta(x)^{\frac{1}{2}}] =0. \label{BC12}
\end{align}
This boundary condition results from Feller's boundary condition \cite{Feller51} for the degenerate Fokker-Planck equation. Later, Feller also proposed a boundary condition such that the trace of the generator vanishes at the boundary \cite{Feller54}, which was realized at the stochastic process level through reflecting Brownian motion \cite{McKean, Tanaka}. We will explain this further below after converting to the Wright-Fisher drift-diffusion process.
The stationary density of the SDE \eqref{SDE2} satisfies
\begin{equation*}
\pi(x)=\frac{1}{Z}e^{-\frac{V(x)}{h}}\theta(x)^{-\frac{1}{2}},
\end{equation*}
where we assume that $Z=\int_0^1 e^{-\frac{V(y)}{h}}\theta(y)^{-\frac{1}{2}} dy<+\infty$.

We also remark that to realize this zero-flux boundary condition, some modifications of Brownian motion in SDE \eqref{SDE2} at the boundary are needed. Typical examples include the killing process, reflection, or sticky Brownian motion \cite{Feller54, ItoMK, McKean, Tanaka}. Particularly, \cite{Anderson} proposed the reflecting boundary condition in the diffusion approximation of chemical reactions.  

\begin{example}[Canonical Wasserstein Diffusion \cite{WD, WL}]\label{ex1}
Consider $V(x)=0$, $\omega h =1$. Then SDE \eqref{SDE2} satisfies
\begin{equation}\label{SDE_X}
    \ud X_t=\frac{1}{2}\theta'(X_t)\ud t+\sqrt{2\theta(X_t)}\ud B_t.
\end{equation}
\end{example}
We will work on some detailed properties for \eqref{SDE_X} below.

\subsection{Conversion to Wright-Fisher process and Fokker-Planck equation with zero-flux boundary condition}\label{sec4.1}

We aim to transform the $(X_t)_{t\ge 0}$ SDE \eqref{SDE_X} into a Wright-Fisher diffusion process \cite{EM}
\begin{equation}
\ud Y_t= \gamma(\frac{1}{2}-Y_t) \ud t + \sqrt{2\gamma Y_t(1-Y_t)}\ud B_t,
\end{equation}
for some constant $\gamma>0$ to be determined. We note that the above SDE in terms of $(Y_t)_{t\ge 0}$ is a continuous Wright-Fisher model with mutations \cite[Formula (61)]{Burden}.

We first observe the following lemma for the transformation from $X_t$-SDE to $Y_t$-SDE.

\begin{lem}
The canonical Wasserstein diffusion process \eqref{SDE_X} can be converted to the Wright-Fisher drift-diffusion process after a change of variable. We find a strictly increasing bijective function $\psi: [0,1]\to[0,1]$, such that
\[
\sqrt{2\theta(x)} \psi'(x) = \sqrt{2\gamma \psi(x)(1-\psi(x))}.
\]
In other words,
\begin{equation}\label{ChangeY}
 \int_0^{\psi(x)}\frac{dz}{\sqrt{z(1-z)}}
=\sqrt{\gamma} \int_0^{x}\frac{dy}{\sqrt{\theta(y)}},
\end{equation}
where $\gamma$ is a constant satisfying
\[
\int_0^{1}\frac{dz}{\sqrt{z(1-z)}}
=\sqrt{\gamma} \int_0^{1}\frac{dy}{\sqrt{\theta(y)}}.
\]
Then SDE \eqref{SDE_X} can be recast as a SDE in terms of $Y_t=\psi(X_t)$
\begin{equation}
\ud Y_t = \gamma(\frac{1}{2}-Y_t) \ud t + \sqrt{2\gamma(Y_t(1-Y_t))}\ud B_t.
\end{equation}
\end{lem}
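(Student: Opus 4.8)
The plan is to realize this as a one-dimensional change of variables for a diffusion: choose the scale-type map $\psi$ so that it normalizes the diffusion coefficient of $X_t$ to the Wright--Fisher form $\sqrt{2\gamma y(1-y)}$, and then verify that the induced drift matches $\gamma(\tfrac12-y)$ automatically. The only substantial analytic input is that $\psi$ genuinely extends to a bijection of the closed interval $[0,1]$; the rest is It\^o's formula.

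First I would construct $\psi$ and pin down $\gamma$. Set $G(u):=\int_0^u \frac{\ud z}{\sqrt{z(1-z)}}=2\arcsin\sqrt{u}$, a strictly increasing bijection $[0,1]\to[0,\pi]$, and $H(x):=\int_0^x \frac{\ud y}{\sqrt{\theta(y)}}$. Assuming $H(1)<\infty$, I would choose $\gamma:=(\pi/H(1))^2$ so that the normalization $G(1)=\sqrt{\gamma}\,H(1)$ of the statement holds, and define $\psi:=G^{-1}\circ(\sqrt{\gamma}\,H)$. Because $G$ and $H$ are strictly increasing with $H(0)=0$ and $\sqrt{\gamma}\,H(1)=\pi=G(1)$, the composite $\psi$ is a strictly increasing bijection $[0,1]\to[0,1]$ fixing both endpoints, and is smooth on $(0,1)$ since $\theta>0$ there. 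This is precisely the relation \eqref{ChangeY}.

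Next I would differentiate and apply It\^o. Differentiating $G(\psi(x))=\sqrt{\gamma}\,H(x)$ yields the defining identity $\sqrt{2\theta(x)}\,\psi'(x)=\sqrt{2\gamma\,\psi(x)(1-\psi(x))}$, equivalently $\theta(x)\,\psi'(x)^2=\gamma\,\psi(x)(1-\psi(x))$; call this relation $(\star)$. Applying It\^o's formula to $Y_t=\psi(X_t)$ and using $(\ud X_t)^2=2\theta(X_t)\,\ud t$ from \eqref{SDE_X} gives
\[
\ud Y_t=\Bigl(\tfrac12\psi'(X_t)\theta'(X_t)+\psi''(X_t)\theta(X_t)\Bigr)\ud t+\psi'(X_t)\sqrt{2\theta(X_t)}\,\ud B_t.
\]
The martingale part equals $\sqrt{2\gamma\,Y_t(1-Y_t)}\,\ud B_t$ directly by $(\star)$. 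For the drift, differentiating $(\star)$ gives $\theta'(\psi')^2+2\theta\psi'\psi''=\gamma\psi'(1-2\psi)$; dividing by $\psi'>0$ and halving produces $\tfrac12\theta'\psi'+\theta\psi''=\gamma(\tfrac12-\psi)$, so the drift is exactly $\gamma(\tfrac12-Y_t)\,\ud t$. This is the asserted Wright--Fisher SDE.

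The main obstacle is the construction step, namely verifying $H(1)<\infty$ so that $\gamma$ exists and $\psi$ actually reaches the endpoint $1$; this amounts to integrability of $1/\sqrt{\theta}$ at $x=0,1$ and depends on the boundary behaviour of $\theta$. For the logarithmic mean one has $\theta(x)\sim 2/|\log x|$ as $x\to 0^+$, so $1/\sqrt{\theta}\sim\sqrt{|\log x|/2}$ is integrable and $H(1)<\infty$; this is the same boundary regularity underlying the Feller condition invoked in the text. A minor secondary point is justifying It\^o's formula up to the boundary: since $X_t$ stays in the open interval $(0,1)$ and $\psi\in C^2((0,1))$, the computation is valid in the interior, and the endpoint behaviour is controlled separately through the zero-flux/Feller analysis.
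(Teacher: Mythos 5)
Your proposal is correct and follows essentially the same route as the paper's proof: change variables via $Y_t=\psi(X_t)$, match the diffusion coefficient by solving $\sqrt{2\theta}\,\psi'=\sqrt{2\gamma\,\psi(1-\psi)}$, and apply It\^o's formula, with the drift $\gamma(\tfrac12-\psi)$ following from differentiating that relation. Your extra steps (explicit construction $\psi=G^{-1}\circ(\sqrt{\gamma}\,H)$ with $G(u)=2\arcsin\sqrt{u}$, the bijectivity check, and the integrability of $1/\sqrt{\theta}$ at the endpoints) simply make rigorous what the paper's terse ``direct computation'' leaves implicit.
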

Let $\textbf{B}$ be the incomplete beta function defined as $\textbf{B}(x,a,b)=\int_0^x t^{a-1}(1-t)^{b-1}dt$. In terms of the beta function $\textbf{B}(\frac{1}{2},\frac{1}{2})$ and the incomplete beta function,
\begin{equation}\label{Bete-f}
\textbf{B}(\psi(x),\frac{1}{2}, \frac{1}{2})
=\sqrt{\gamma} \int_0^{x}\frac{dy}{\sqrt{\theta(y)}} \quad \text{ with } \sqrt{\gamma}= \frac{\textbf{B}(\frac{1}{2}, \frac{1}{2})}{\int_0^{1}\frac{dy}{\sqrt{\theta(y)}}}.
\end{equation}

\begin{proof}
The proof follows from a direct computation. Consider $Y_t=\psi(X_t)$. Then SDE \eqref{SDE_X} satisfies
\begin{equation*}
  \ud\psi(X_t)=\Big(\frac{1}{2} \psi'(X_t)\theta'(X_t)+\theta(X_t) \psi''(X_t)\Big) \ud t+\sqrt{2\theta(X_t)}\psi'(X_t) \ud B_t.   
\end{equation*}
To obtain a Wright-Fisher type degenerate diffusion, we need to solve
\begin{equation}\label{psiD}
 \sqrt{2\theta}\psi'=\sqrt{2\gamma\psi(1-\psi)},   
\end{equation} 
which implies \eqref{ChangeY} and
$$
\frac{1}{2} \theta' \psi' + \theta \psi'' =\gamma \left(\frac{1}{2}-\psi\right).
$$ 
We obtain SDE for $Y_t=\psi(X_t)$
\[
\ud Y_t = \gamma\left(\frac{1}{2}-Y_t\right) \ud t + \sqrt{2\gamma(Y_t(1-Y_t))}\ud B_t.
\]
\end{proof}

Next, to ensure the process is well-defined on $[0,1]$, we observe that the coefficients in $(Y_t)_{t\ge0}$-SDE satisfy the regular condition for the degenerate SDE \cite{Feller51}. Thus, modifications are needed whenever the Brownian motion hits a boundary. In terms of the Fokker-Planck equation, the zero-flux boundary condition needs to be imposed at $y=0,1$. Specifically,
the Fokker-Planck equation for the stochastic process $(Y_t)_{t\ge0}$ is
\[
\partial_t \tilde p + \gamma \partial_y((\frac{1}{2}-y)\tilde p) = \gamma \partial_{yy} (y(1-y)\tilde p).
\]
We impose zero-flux boundary conditions
\begin{equation}\label{0bc}
 \lim_{y\to 0^+} \left(\frac{1}{2}-y\right)\tilde p - \partial_{y} (y(1-y)\tilde p) = 
\lim_{y\to 0^+} \sqrt{y(1-y)} \partial_{y} (\tilde p\sqrt{y(1-y)}) =
0,   
\end{equation}
and
\begin{equation}\label{0bc1}
 \lim_{y\to 1^-} \left(\frac{1}{2}-y\right)\tilde p - \partial_{y} (y(1-y)\tilde p) = 
\lim_{y\to 1^-} \sqrt{y(1-y)} \partial_{y} (\tilde p\sqrt{y(1-y)}) =
0.   
\end{equation}
From the conservative form of the flux, one can directly verify that the invariant measure for $(Y_t)_{t\ge0}$ is
\[
\tilde{\pi}(y)=\frac{1}{\mathbf{B}(\frac{1}{2},\frac{1}{2})}y^{-\frac{1}{2}}(1-y)^{-\frac{1}{2}}.
\]
This is also a detailed balanced invariant measure since $\tilde{\pi}(y)$ has zero flux everywhere.

In terms of $(X_t)_{t\ge0}$ SDE, the Fokker-Planck equation is
\begin{equation}\label{FPttx}
\partial_t p = \partial_{xx}(p)- \frac{1}{2} \partial_x(\theta' p) = \pt_x \left[\sqrt{\theta(x)}\partial_x \left( \sqrt{\theta(x)} p(t,x)\right) \right].
\end{equation}
This can also be derived from the following two relations. From \eqref{psiD}, we have
\[
\sqrt{y(1-y)}\partial_y = \sqrt{\theta(x)}\partial_x;
\]
and 
\[
\tilde p(t,y)=\tilde p(t,\psi(x)) = \frac{p(t,x)}{\psi'(x)}.
\]
This implies
\[
\sqrt{y(1-y)} \tilde p = \sqrt{\theta(x)} p(t,x).
\]
The zero-flux boundary conditions \eqref{0bc}\eqref{0bc1} become
\[
\lim_{x\to 0^+} 
\sqrt{\theta(x)}\partial_x \left( \sqrt{\theta(x)} p(t,x)\right)= \lim_{x\to 0^+} 
-\frac{1}{2} \theta' \tilde p + \partial_x (\tilde p) = 0,
\]
and
\[
\lim_{x\to 1^-} 
\sqrt{\theta(x)}\partial_x \left( \sqrt{\theta(x)} p(t,x)\right)= \lim_{x\to 1^-} 
-\frac{1}{2} \theta' \tilde p + \partial_x (\tilde p) = 0.
\]
We can verify that the invariant measure for $(X_t)_{t\geq 0}$ is
\[
\pi(x) = \frac{1}{\mathbf{B}(\frac{1}{2},\frac{1}{2})}\psi(x)^{-\frac{1}{2}}(1-\psi(x))^{-\frac{1}{2}}\psi'(x)=\frac{\sqrt{\gamma}}{\mathbf{B}(\frac{1}{2},\frac{1}{2})} \frac{1}{\sqrt{\theta(x)}}
=\frac{1}{Z\sqrt{\theta(x)}}.
\]
The invariant measure $\pi(x)$ also satisfies the corresponding zero-flux boundary condition.

For the example $\theta(x)=2\sqrt{x(1-x)}$, \eqref{Bete-f} becomes
$$
\textbf{B}(\psi(x),\frac{1}{2}, \frac{1}{2})=\frac{\textbf{B}(\frac{1}{2}, \frac{1}{2})}{\sqrt{2}\textbf{B}(\frac{3}{4}, \frac{3}{4})} \textbf{B}(x, \frac{3}{4},\frac{3}{4}).
$$
Then the invariant distribution for $(X_t)_{t\geq 0}$ can be explicitly represented as
\begin{equation*}
  \pi(x)=\frac{1}{\mathbf{B}(\frac{3}{4},\frac{3}{4})}x^{-\frac{1}{4}}(1-x)^{-\frac{1}{4}}.
\end{equation*}

 
 \subsection{Fundamental solutions}\label{FSWFP}
We now derive the closed-form solution of the Fokker-Planck equation \eqref{FP1} with $V=0$ and $h\omega=1$ for the canonical Wasserstein diffusion \eqref{SDE_X}. 

Recall the Fokker-Planck equation \eqref{FPttx} for \eqref{SDE_X}
\begin{equation*}
\begin{split}
\partial_t p(t,x)
=\partial_x\Big(p(t,x)\theta(x)\partial_x\log\frac{p(t,x)}{\theta(x)^{-\frac{1}{2}}}\Big)
=\partial_x\Big(\sqrt{\theta(x)}\partial_x\left(p(t,x)\sqrt{\theta(x)}\right)\Big) 
\end{split}
\end{equation*}
with $p(0,x)=p_0(x)$ being a given initial distribution. Here, the zero-flux boundary condition reads
\begin{align}
&\lim_{x\to 0^+}\sqrt{\theta(x)}\partial_x\left(p(t,x)\sqrt{\theta(x)}\right)=0, \label{BC1} \quad 
&\lim_{x\to 1^-}\sqrt{\theta(x)}\partial_x\left(p(t,x)\sqrt{\theta(x)}\right)=0. 
\end{align}
The invariant distribution satisfies
\begin{equation*}
   \pi(x):= \frac{1}{Z}\theta(x)^{-\frac{1}{2}}, \quad Z:=\int_0^1\theta(r)^{-\frac{1}{2}}dr.
\end{equation*}

Denote the function
\[
h(t,x)= Z \theta(x)^{\frac{1}{2}} p(t,x).
\]
We have
\begin{equation*}
 \partial_t h(t,x)=\theta(x)^{\frac{1}{2}}  \partial_x\Big(\theta(x)^{\frac{1}{2}}\partial_x h(t,x)\Big), \quad h(0,x)=Z \theta(x)^{\frac{1}{2}} p_0(x).   
\end{equation*}
Let
\[
y(x)=\frac{1}{Z} \int_0^x\frac{1}{{\theta(r)^{\frac{1}{2}}}}dr,  \quad x\in[0,1].
\]
This is a bijective monotone map, such that $dy=\frac{dx}{Z\sqrt{\theta(x)}}$.  

The function $u(t,y):=h(t,x(y))$ satisfies the classical heat equation with imposed Neumann boundary condition 
\begin{equation}\label{heat}
    \partial_t u(t,y)=\frac{1}{Z^2}\partial^2_{yy}u(t,y),\quad    \partial_yu(t,0)=0, \,\, \partial_y u(t,1)=0,\quad \forall t>0. 
\end{equation}
One can verify this homogeneous Neumann boundary condition for $u$ is equivalent to the zero-flux boundary condition for $p(t,x)$ in the Fokker-Planck equation \eqref{BC1}
$$\partial_y u(t,y)\Big|_{y=0,1} =Z \theta(x)^{\frac{1}{2}}\partial_x h(t,x)\Big|_{x=0,1} = Z^2 \sqrt{\theta(x)}\partial_x\left(p(t,x)\sqrt{\theta(x)}\right)\Big|_{x=0,1}=0, \quad \forall t>0.$$
We can solve $u$ as
$$u(t,y)=\sum_{k=0}^{\infty}c_k e^{-(\frac{k\pi}{Z})^2 t} \cos( k\pi y ), \quad c_0 = \int_0^1 u_0(y) \ud y = 1, \,\,   c_k = 2\int_0^1 u_0(y) \cos( k\pi y ) \ud y, \, k>1.$$ 

There is an explicit solution for the discrete Wasserstein-2 distance between $p^0=(0,1)$ and $p^1=(x, 1-x)$ in a two-point space \cite{EM1, GLL}  
\[
W_2(p^0,p^1)=W_2((0,1),(x,1-x))= \int_0^x\frac{1}{{\theta(r)^{\frac{1}{2}}}}dr = y(x) Z.
\] 
Plugging this into $u(t,y)$, we obtain
\begin{equation*}
\begin{split}
    p(t,x)
=&\frac{\theta(x)^{-\frac{1}{2}}}{Z}\sum_{k=0}^{\infty}c_k e^{-(\frac{k\pi}{Z})^2 t} \cos( \frac{k\pi}{Z} W_2((0,1),(x,1-x))),
\end{split}
\end{equation*}
where
\begin{equation*}
\begin{split}
 c_0=1, \quad c_k 
=&2  \int_0^1 \rho_0(z) \cos( \frac{k\pi}{Z} W_2((0,1),(z,1-z))) \ud z, \quad k>0.
\end{split}
\end{equation*}

Denote $G$ as the Green function on an interval $[0,1]$
\begin{equation*}
  G(t,x,z)=  \frac{1}{Z}\theta(x)^{-\frac{1}{2}}+\frac{2}{Z}\theta(x)^{-\frac{1}{2}}\sum_{k=1}^\infty e^{-(\frac{k\pi}{L})^2 t}\cos( \frac{k\pi}{Z} W_2((0,1),(z,1-z)))\cdot \cos( \frac{k\pi}{Z} W_2((0,1),(x,1-x))), 
\end{equation*}
where $x,z\in [0,1]$. We call $G$ the {\em ``Wasserstein Green function''} on a two-point graph. 
Thus, 
\begin{equation*}
   p(t,x)=\int_0^1 G(t,x, z)\rho_0(z)dz, 
\end{equation*}
 
We remark that the beta function reformulates the Wasserstein distance on a graph in the Green function for the case $\theta(x)=2 \sqrt{x(1-x)}$. Notice
\begin{equation*}
W_2((0,1),(x,1-x))=\int_0^x\frac{1}{\sqrt{2}r^{\frac{1}{4}}(1-r)^{\frac{1}{4}}}dr=\frac{1}{\sqrt{2}}\mathbf{B}(x,\frac{3}{4},\frac{3}{4}), 
\end{equation*}
where $\textbf{B}$ is the incomplete beta function defined as $\textbf{B}(x,a,b)=\int_0^x t^{a-1}(1-t)^{b-1}dt$.

 
 \section{Discussions}
This paper studies the fluctuations of gradient flows on probability spaces. Here, the gradient flow can be regarded as the Kolmogorov forward equation of a Markov process, while the fluctuations can originate from different constructions. We construct fluctuations resulting from the diffusion approximation of the frequency process for the molecular numbers of a detailed balanced linear chemical reaction. This leads to Langevin dynamics on a probability simplex with a Riemannian structure suggested by Onsager's response matrix.

The Onsager response matrix in the gradient flow naturally induces a Riemannian metric on the probability simplex, which can also be interpreted as the Wasserstein-2 type metric on graphs. We also study the Dirichlet form in the $L^2$ space weighted by a Gibbs measure. The first variation of this Dirichlet form gives the generator of the Langevin dynamics. Detailed properties of the Fokker-Planck equations and the connection with the Wright-Fisher process were established on a two-point state space. 

A well-known fact is that the detailed-balanced Markov process is the gradient flow of free energy in the Wasserstein-2 metric on graphs \cite{EM1, MM, M, chow2012}. Extending this gradient flow structure with fluctuations, we further build a drift-diffusion process on discrete Wasserstein spaces, analogous to stochastic Fokker-Planck equations. Onsager's response matrix facilitates the construction of a $(d-1)$-dimensional degenerate Brownian motion on the probability simplex, whose generator corresponds to the Laplace-Beltrami operator on discrete Wasserstein space. In particular, there is degeneracy near the boundary of the simplex set. The well-definedness of the proposed SDEs was partially verified via the connection with the one-dimensional Wright-Fisher process for population genetics in a two-point state space. The general case remains unsolved. 

There are also several open questions in this field. For example, what are the general transition probabilities for Wasserstein drift-diffusion processes? What are finite-dimensional approximations of Wasserstein diffusions on general probability spaces, such as Dean-Kawasaki dynamics? From a modeling perspective, one can also explore applying Wasserstein diffusion on probability simplex to study   multi-allelic  models in population genetics \cite{Burden}.   Efficient numerical simulations of Wasserstein diffusion processes are also important future directions. 

\bibliographystyle{abbrv}

\end{document}